\numberwithin{equation}{section}
\newtheorem{lemma}{Lemma}
\newtheorem{proposition}{Proposition}
\newtheorem{theorem}{Theorem}
\theoremstyle{remark}
\def\C{\mathbb C}
\def\R{\mathbb R}
\def\d{\partial}
\newcommand{\pair}[1]{\left\langle #1 \right\rangle}
\newcommand{\norm}[1]{\left\lVert #1 \right\rVert}
\newcommand{\abs}[1]{\left\lvert #1 \right\rvert}
\DeclareMathOperator{\Id}{Id}
\DeclareMathOperator{\End}{End}
\DeclareMathOperator{\tr}{tr}
\DeclareMathOperator{\Real}{Re}
\DeclareMathOperator{\Imag}{Im}
\date{Compiled \today}
\title[Recovery of Time-Dependent Coefficients]{Recovery of a Time-Dependent Hermitian Connection and Potential Appearing in the Dynamic Schr\"odinger Equation}
\author[A. Tetlow]{Alexander Tetlow}
\begin{document}

\begin{abstract}
We consider, on a trivial vector bundle over a Riemannian manifold with boundary, the inverse problem of uniquely recovering time- and space-dependent coefficients of the dynamic, vector-valued Schr\"odinger equation from the knowledge of the Dirichlet-to-Neumann map. We show that the D-to-N map uniquely determines both the connection form and the potential appearing in the Schr\"odinger equation, under the assumption that the manifold is either a) two-dimensional and simple, or b) of higher dimension with strictly convex boundary and admits a smooth, strictly convex function.
\end{abstract}
\maketitle

\section{Introduction}

\subsection{Statement of the Problem}
Let $T>0$ be fixed, and let $(M,g)$ be a connected, compact, smooth Riemannian manifold of dimension $m\geq2$ with boundary $\d M$. In what follows, we shall additionally assume that $(M,g)$ is non-trapping. Consider a trivial Hermitian vector bundle $E=M\times\C^n$ equipped with the Hermitian inner product $\pair{\cdot,\cdot}_E$.\\

We say that a connection $\nabla:C^\infty(M;E)\rightarrow C^\infty(M;E\otimes T^\ast M)$ is compatible with the Hermitian structure of $E$ if for any sections $u,v\in C^\infty(M;E)$ it holds that\begin{equation}\label{HIP}d\pair{u,v}_E=\pair{\nabla u,v}_E+\pair{u,\nabla v}_E,\end{equation}where both sides of the above are regarded as sections of the cotangent bundle.\\

Such a connection has the form $\nabla=d+A$, where $A=A_idx^i$ and each $A_i(x)$ is given by an $n\times n$ skew-Hermitian matrix. In what follows, we allow the connection form $A$ to also depend smoothly on time, and write $\nabla^A:C^\infty((0,T)\times M;E)\rightarrow C^\infty((0,T)\times M;E\otimes T^\ast M)$ for the time-dependent connection corresponding to the connection form $A$. In other words, $\nabla^{A(t)}$ is a connection on $C^\infty(M;E)$ for each $t\in[0,T]$, and each $\nabla^{A(t)}$ is compatible with the Hermitian metric on $E$.\\

We can define a natural $L^2$-inner product on $C^\infty((0,T)\times M;E)$ via \[\pair{u,v}_{L^2((0,T)\times M;E)}=\int_0^T\int_M\pair{u,v}_EdVdt,\]where $dV$ denotes the usual Riemannian volume measure of $(M,g)$. We can similarly define a natural $L^2$-inner product on $C^\infty((0,T)\times M;E\otimes T^\ast M)$. For $E$-valued $1$-forms $\alpha=\alpha_jdx^j$ and $\beta=\beta_jdx^j$, we set\[\pair{\alpha,\beta}_{L^2((0,T)\times M;E\otimes T^\ast M)}=\int_0^T\int_Mg^{ij}\pair{\alpha_i,\beta_j}_EdVdt,\]where $g^{ij}$ denotes the inverse of the metric tensor.\\

We let $(\nabla^A)^\ast$ denote the adjoint of $\nabla^A$ with respect to the above inner products. We can then define the connection Laplacian $\Delta_A=-(\nabla^A)^\ast\nabla^A$, which corresponds to the connection form $A$.\\

We can compute local expressions for $(\nabla^A)^\ast$ and $\Delta_A$. Consider a section $u\in C^\infty(M;E)$ and an $E$-valued $1$-form $\beta=\beta_jdx^j$ supported on a local trivialisation. Since $A$ is skew-Hermitian, it holds that\[\pair{Au,\beta}_{L^2((0,T)\times M;E\otimes T^\ast M)}=\int_0^T\int_Mg^{ij}\pair{A_iu,\beta_j}_EdVdt=-\int_0^T\int_M\pair{u,g^{ij}A_i\beta_j}_EdVdt.\]

Letting $(A,\beta)_g=g^{ij}A_i\beta_j$, we see that $(\nabla^A)^\ast=d^\ast-(A,\cdot)_g$. Therefore, we have\[\Delta_Au=-d^\ast du-d^\ast(Au)+(A,du)_g+(A,Au)_g.\]Recall that $d^\ast\alpha=-\abs{g}^{-\frac{1}{2}}\d_i(\abs{g}^{\frac{1}{2}}g^{ij}\alpha_j)$, where $\abs{g}$ is the determinant of the metric tensor. Hence it holds that $d^\ast(Au)=(d^\ast A)u-(A,du)_g$. Thus, we conclude that\begin{equation}\Delta_Au=-d^\ast du+2(A,du)_g-(d^\ast A)u+(A,Au)_g.\label{CLL}\end{equation}

Lastly, we say that a section $V\in C^\infty((0,T)\times M;\C^{n\times n})$ is a potential if $V$ is Hermitian or, equivalently, for any sections $u,v\in C^\infty((0,T)\times M;E)$ it holds that\[\pair{Vu,v}_E=\pair{u,Vv}_E.\]

Let $\Delta_A$ and $V$ be as above and consider the following initial and boundary value problem for sections $u\in C^\infty((0,T)\times M;E)$.\begin{equation}\begin{split}\label{1.1}i\d_tu(t,x)+\Delta_{A(t)}u(t,x)+V(t,x)u(t,x)&=0\textrm{ in }(0,T)\times M,\\u(t,x)&=f\textrm{ on }(0,T)\times\d M,\\u(0,x)&=0\textrm{ in }M,\end{split}\end{equation}where the inhomogeneous Dirichlet data is given by $f\in C^\infty((0,T)\times\d M;E)$ satisfying $f\vert_{t=0}=\d_tf\vert_{t=0}=0$. We can then define the associated Dirichlet-to-Neuman map via\[\Lambda_{A,V}f={\nabla^A_\nu u}\Big\vert_{(0,T)\times\d M}\ ,\]where $\nu$ denotes the outward pointing unit normal vector field on $\d M$.\\

There is a natural gauge group associated with the equation above. Let $G:(0,T)\times M\rightarrow U(n)$ be a smooth map such that $G(t)\vert_{\d M}=\Id$, and choose $A_2=G^{-1}A_1G+G^{-1}dG$ and $V_2=G^{-1}V_1G+iG^{-1}\d_tG$. It then holds that $\nabla^{A_2}=G^{-1}\nabla^{A_1}G$, and hence that $\Delta_{A_2}=G^{-1}\Delta_{A_1}G$. We observe that if $u$ solves (\ref{1.1}) with $A=A_2$ and $V=V_2$, then $Gu$ solves the equation with $A=A_1$ and $V=V_1$, since\[(i\d_t+\Delta_{A_1}+V_1)Gu=G(i\d_t+\Delta_{A_2}+V_2)u=0.\]Furthermore, we observe that when the pairs $(A_1,V_1)$ and $(A_2,V_2)$ are as above, it holds that $\Lambda_{A_1,V_1}=\Lambda_{A_2,V_2}$. Therefore, we can only hope to recover the pair $(A,V)$ up to a gauge transform. The aim of the present work is to establish unique recovery of the connection form and potential from the knowledge of the Dirichlet-to-Neumann map, modulo gauge invariance.\\

\subsection{History of the Problem}

Literature dealing with the recovery of space- and time-dependent potentials of the dynamic Schr\"odinger equation is limited, even in the scalar case. For Euclidean domains, it was shown in \cite{11} that the time-dependent electromagnetic potentials are uniquely determined by the Dirichlet-to-Neumann map. Logarithmic-stable determination was shown for the electric potential in \cite{9}, and this result was extended to the full electromagnetic potential in \cite{7}, provided that the time-independent part of the magnetic potential is sufficiently small. Indeed, it was only recently shown in \cite{22} that time-dependent electromagnetic potentials in a Euclidean domain can be H\"older-stably recovered from the knowledge of the D-to-N map. We also mention here the recent work of \cite{BbF}, which establishes logarithmic and double-logarithmic stability estimates for the same problem with partial data.\\

In the Riemannian setting, \cite{3} and \cite{4} establish, respectively, H\"older-stable recovery of a time-independent magnetic and electric potential of the dynamic Schr\"odinger equation on a simple manifold. These results were extended to simultaneous recovery of both electromagnetic potentials in \cite{2}. In the case of time-dependent potentials in the Riemannian context, the only result is that of \cite{KT}, establishing, on a simple manifold, the H\"older-stable recovery of both potentials from the knowledge of the Dirichlet-to-Neumann map.\\

In the case of the vector-valued dynamic Schr\"odinger equation, there are, to the best of the author's knowledge, no results establishing unique recovery even for time-independent coefficients. However, such results do exist for the related case of the stationary Schr\"odinger equation. In particular, for the stationary Schr\"odinger equation on a trivial vector bundle over a Euclidean domain, \cite{Esk1} establishes unique recovery of a connection form and potential from the knowledge of the Dirichlet-to-Neumann map. Additionally, for the stationary Schr\"odinger equation on a Hermitian vector bundle over a two-dimensional Riemann surface, \cite{AGTU} uniquely recovers the coefficients from Cauchy data at the boundary.\\

Let us also mention the paper \cite{MC1}, where it is conjectured that the Dirichlet-to-Neumann maps for two connection Laplacians coincide in the case of the stationary Schr\"odinger equation if and only if the associated connection forms are gauge equivalent. The present work solves this conjecture in the non-stationary case. More precisely, we show that the Dirichlet-to-Neumann map uniquely determines, up to gauge invariance, the space- and time-dependent connection form and potential appearing in the dynamic Schr\"odinger equation on a trivial vector bundle over a Riemannian manifold, provided that the manifold in question satisfies certain geometric conditions.\\

Finally, we note the works \cite{MC2}, \cite{KOP}, and \cite{CLOP}, where various inverse problems for partial differential equations involving connections are considered. In particular, \cite{MC2} establishes unique recovery of a time-independent unitary Yang-Mills connection on a Hermitian vector bundle, as well as recovering the bundle structure. Similarly, in the case of the wave equation, \cite{KOP} uses techniques from the boundary control method to reconstruct a Riemannian manifold and Hermitian vector bundle with a time-independent compatible connection from the knowledge of the associated hyperbolic Dirichlet-to-Neumann map, and the work \cite{CLOP} considers the non-linear inverse problem of recovering a time-independent connection from a cubic wave equation on a Hermitian vector bundle over the Minkowski space $\R^{1+3}$.\\

Lastly, let us mention the recent work of \cite{MV}, where the authors recover a time-dependent potential of the wave equation on a trivial vector bundle over a Euclidean domain from knowledge of the input-output operator on the partial boundary.\\

\subsection{Geodesics and Parallel Transport}

Let us assume that $(M,g)$ is non-trapping, which is to say that every geodesic in $M$ reaches the boundary in finite time. We now take a moment to recall certain key facts relating to the geodesics in $M$.\\

Given $x\in M$ and $\theta\in T_xM$, we denote by $\gamma_{x,\theta}$ the geodesic with initial point $x$ and initial direction $\theta$. We define the sphere bundle of $M$ via\[SM=\{(x,\theta)\in TM:\abs{\theta}_g=1\}.\]

Likewise, we define the submanifold of inner vectors $\d_+SM$ via\[\d_+SM=\{(x,\theta)\in SM:x\in\d M,\ \pair{\theta,\nu(x)}_{g(x)}<0\},\] where $\nu(x)$ is the outward pointing unit normal vector at $x\in\d M$, and we define the submanifold of outer vectors $\d_-SM$ via\[\d_-SM=\{(x,\theta)\in SM:x\in\d M,\ \pair{\theta,\nu(x)}_{g(x)}>0\}.\] Then, for $\gamma_{x,\theta}$ such that $(x,\theta)\in\d_+SM$, we can define the exit-time $\rho_+(x,\theta)$ of the geodesic in $M$ by\[\rho_+(x,\theta)=\min\{s>0:\gamma_{x,\theta}(s)\in\d M\}.\]

Given the above, we further recall the parallel transport equations associated to a connection $\nabla^A$. For any geodesic $\gamma_{x,\theta}$ with $(x,\theta)\in\d_+SM$, and any initial vector $w\in E_x$, we consider the parallel transport equation along $\gamma_{x,\theta}$, given by\begin{equation}\begin{split}\label{PT1}\Big[\d_r+A\big(\gamma'_{x,\theta}(r)\big)\Big]W=&\ 0\\W(0)=&\ w.\end{split}\end{equation}The transport of $w\in E_x$ along $\gamma_{x,\theta}$ is thus given by $W(r)$.\\

It is frequently helpful to consider the fundamental matrix solution $U_A:\big[0,\rho_+(x,\theta)\big]\rightarrow U(n)$ of the parallel transport equation:\begin{equation}\begin{split}\label{PT2}\big[\d_r+A\big(\gamma'_{x,\theta}(r)\big)\big]U_A=&\ 0\\U_A(0)=&\ \Id.\end{split}\end{equation}

It is clear from the above that the transport of $w\in E_x$ along $\gamma_{x,\theta}$ is given by $W(r)=U_A(r)\cdot w$.\\

Given $(x,\theta)\in\d_+SM$, we define the scattering data for the connection as the map $C_A:\d_+SM\rightarrow U(n)$ given by\begin{equation}C_A(x,\theta):=U_A\big(\rho_+(x,\theta)\big).\label{ScatteringData}\end{equation}

Since elements of the gauge group must satisfy $G(t)\vert_{\d M}=\Id$, we note that the scattering data $C_A$ is gauge invariant. We are now in a position to state the main results of the present work.\\

\subsection{Main Results}
\begin{theorem}\label{t1}Suppose that for $j=1,2$, we have connection forms $A_j\in C^\infty((0,T)\times M;\C^{n\times n}\otimes T^\ast M)$, and potentials $V_j\in C^\infty((0,T)\times M;\C^{n\times n})$. Then $\Lambda_{A_1,V_1}=\Lambda_{A_2,V_2}$ implies that $C_{A_1}=C_{A_2}$.
\end{theorem}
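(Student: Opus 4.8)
The plan is to deduce the equality of scattering data from the standard integral identity produced by a coincidence of Dirichlet-to-Neumann maps, by testing that identity against geometric-optics solutions concentrating on a single geodesic; for this step only the standing assumptions on $(M,g)$ — in particular, non-trapping — are needed. Fix $(x,\theta)\in\partial_+SM$, put $\gamma:=\gamma_{x,\theta}$ and $\ell:=\rho_+(x,\theta)<\infty$, and write $P_j:=i\partial_t+\Delta_{A_j}+V_j$. Since $\Delta_{A_j}=-(\nabla^{A_j})^\ast\nabla^{A_j}$ is self-adjoint, $V_j$ is Hermitian, and $i\partial_t$ is skew-adjoint up to endpoint-in-time terms, $P_j$ is formally self-adjoint, its adjoint differing only in that the initial condition at $t=0$ is replaced by a terminal condition at $t=T$. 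Let $u_1$ solve $P_1u_1=0$ with $u_1|_{t=0}=0$ and admissible Dirichlet data $f$; let $v$ solve $P_2v=0$ with $v|_{t=0}=0$ and the same data $f$; and set $w:=u_1-v$. The hypothesis $\Lambda_{A_1,V_1}=\Lambda_{A_2,V_2}$ gives $\nabla^{A_1}_\nu u_1=\nabla^{A_2}_\nu v$ on $(0,T)\times\partial M$, hence $w|_{(0,T)\times\partial M}=0$, $w|_{t=0}=0$, $\nabla^{A_2}_\nu w=\partial_\nu w=(A_2-A_1)(\nu)f$ on the lateral boundary, while $P_2w=(P_2-P_1)u_1$. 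Pairing against any $u_2$ with $P_2u_2=0$ and $u_2|_{t=T}=0$ and integrating by parts in $t$ and $x$, using (\ref{CLL}) and the compatibility (\ref{HIP}), every endpoint-in-time contribution vanishes and one is left with
\begin{equation}\label{plan-id}
\int_0^T\!\!\int_M\big\langle(\Delta_{A_2}-\Delta_{A_1}+V_2-V_1)u_1,\,u_2\big\rangle_E\,dVdt=\int_0^T\!\!\int_{\partial M}\big\langle(A_2-A_1)(\nu)u_1,\,u_2\big\rangle_E\,dSdt.
\end{equation}
By (\ref{CLL}), $\Delta_{A_2}-\Delta_{A_1}$ sends $u\mapsto 2(A_2-A_1,du)_g-(d^\ast(A_2-A_1))u+(A_2,A_2u)_g-(A_1,A_1u)_g$, and only the first term is of first order.

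\textbf{Concentrating solutions.} For a large parameter $\lambda$ I would build Gaussian-beam quasimodes for $P_1$ and $P_2$ concentrating on $\gamma$, of the form $u_j\approx e^{i\lambda\Psi(x)-i\lambda^2t}\big(a_j+\lambda^{-1}a_j^{(1)}+\cdots\big)$, where $\Psi$ solves the complex eikonal equation $|\nabla_x\Psi|_g=1$ to high order along $\gamma$, with $\Imag\Psi\ge0$ vanishing exactly on $\gamma$ and with positive-definite transverse Hessian, so that $|e^{i\lambda\Psi}|$ is Gaussian-concentrated on $\gamma$ regardless of conjugate points; the factor $e^{-i\lambda^2t}$ is dictated by the Schr\"odinger scaling. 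A smooth time cutoff supported in $(0,T)$, vanishing to infinite order at both endpoints, is absorbed into the amplitudes, so that $u_j|_{t=0}=0$ and $u_j|_{t=T}=0$ hold to infinite order; carrying enough amplitude terms makes the PDE residual $O(\lambda^{-1})$ in $L^1_tL^2_x$, and correcting $u_j$ to an exact solution via the energy estimates for $P_j$ introduces a remainder that is $O(\lambda^{-1})$ in $C_tL^2_x$ and $O(1)$ in $C_tH^1_x$. The key structural fact is that the transport equation for the leading amplitude along $\gamma$ decouples: its scalar part is determined by $\Psi$ and the geometry alone, identically for $j=1,2$, whereas — since in (\ref{CLL}) the only $A_j$-dependent term entering the subprincipal symbol is $2(A_j,d\Psi)_g$, equal along $\gamma$ to $2A_j(\dot\gamma)$ — the endomorphism part is precisely the parallel-transport equation (\ref{PT2}) for $A_j$ along $\gamma$. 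One may therefore take the leading amplitude along $\gamma$ equal to a common, nowhere-vanishing scalar times $U_{A_j(t)}(s)b_j$, with $b_j\in\C^n$ arbitrary, and normalize the beams so that $\int_M\langle Xu_1,u_2\rangle_E\,dV\to c\int_0^\ell\big\langle X(t,\gamma(s))\,U_{A_1(t)}(s)b_1,\,U_{A_2(t)}(s)b_2\big\rangle_E\,ds$ as $\lambda\to\infty$, for every continuous endomorphism field $X$ and a fixed $c\neq0$.

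\textbf{Passage to the limit and the total-derivative trick.} Substituting the two families into (\ref{plan-id}) and dividing by $\lambda$: the right-hand side and every term on the left except the one from $2(A_2-A_1,du_1)_g$ is $O(1)$, while in that surviving term $du_1\approx i\lambda(d\Psi)u_1$ with $d\Psi$ dual to $\dot\gamma$ along $\gamma$. Letting $\lambda\to\infty$ and using the concentration property, then varying and removing the time cutoffs, yields
\[
\int_0^\ell\big\langle(A_2-A_1)(t,\gamma(s))(\dot\gamma(s))\,U_{A_1(t)}(s)b_1,\ U_{A_2(t)}(s)b_2\big\rangle_E\,ds=0
\]
for every $t\in(0,T)$ and all $b_1,b_2\in\C^n$, i.e. $\int_0^\ell U_{A_2(t)}(s)^\ast(A_2-A_1)(t,\gamma(s))(\dot\gamma(s))\,U_{A_1(t)}(s)\,ds=0$. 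Since $U_{A_j(t)}(s)\in U(n)$ we have $U_{A_2(t)}(s)^\ast=U_{A_2(t)}(s)^{-1}$, and a direct computation from (\ref{PT2}) shows that the integrand is exactly $\tfrac{d}{ds}\big(U_{A_2(t)}(s)^{-1}U_{A_1(t)}(s)\big)$. Hence $U_{A_2(t)}(\ell)^{-1}U_{A_1(t)}(\ell)=U_{A_2(t)}(0)^{-1}U_{A_1(t)}(0)=\Id$, that is $U_{A_1(t)}(\rho_+(x,\theta))=U_{A_2(t)}(\rho_+(x,\theta))$, which by (\ref{ScatteringData}) is precisely $C_{A_1}(x,\theta)=C_{A_2}(x,\theta)$. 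As $(x,\theta)\in\partial_+SM$ and $t\in(0,T)$ were arbitrary, $C_{A_1}=C_{A_2}$.

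\textbf{Main obstacle.} The analytic heart of the argument is the geometric-optics step: producing beam solutions that exactly meet the initial and lateral-boundary conditions of (\ref{1.1}) while having remainders so small that, after the division by $\lambda$ forced by the first-order nature of $A_2-A_1$, every error term still vanishes in the limit. This demands care with the Schr\"odinger time-scaling inside the phase, with the order to which the eikonal and transport equations must be solved along $\gamma$, with the $L^2$- and $H^1$-energy estimates for $P_j$ (needed to control products of the correction terms with first-order operators applied to the beams), and with the fact that $A_j$, hence $U_{A_j(t)}$, carries $t$ as a parameter. The geometry imposes no further constraint here: the beams are local around $\gamma$, so non-trapping is all that is used.
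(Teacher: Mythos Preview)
Your proposal is correct and follows the same strategy as the paper --- Gaussian beams concentrating on a fixed geodesic, the integral identity from equality of the two DN maps, division by the large parameter so that only the first-order term $2(A_2-A_1,du_1)_g$ survives, and the parallel-transport total-derivative identity $\tfrac{d}{ds}\big(U_{A_2}^{-1}U_{A_1}\big)=U_{A_2}^{-1}(A_2-A_1)(\dot\gamma)\,U_{A_1}$ --- the only notable variation being that the paper first gauge-fixes (Lemma~\ref{edit_gaugefix}) to arrange $A_j(\nu)\vert_{\partial M}=0$ so the lateral boundary contribution vanishes identically, whereas you keep the term $\int_{\partial M}\langle(A_2-A_1)(\nu)u_1,u_2\rangle_E$ and let the division by $\lambda$ kill it in the limit. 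The concentration limit you invoke as a black box is exactly the paper's Lemma~\ref{SPC}, which also handles the cross terms arising from geodesic self-intersections by stationary phase.
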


\begin{theorem}\label{t2}
Assume the conditions of Theorem \ref{t1} hold, and assume further that $M$ is either i) $2$-dimensional and simple, or ii) of dimension $m\geq3$ with strictly convex boundary, and admits a smooth strictly convex function. Then $(A_1,V_1)$ is gauge equivalent to $(A_2,V_2)$.
\end{theorem}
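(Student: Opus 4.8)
Proof Proposal for Theorem 2The plan is to deduce Theorem~\ref{t2} from Theorem~\ref{t1} in three steps: use the equality of scattering data to determine the connection up to gauge, gauge the connection away so as to reduce to the recovery of a time-dependent potential, and carry out that recovery with geometric optics solutions together with the injectivity of a geodesic ray transform. For the first step, Theorem~\ref{t1} gives $C_{A_1}=C_{A_2}$, so for each fixed $t\in[0,T]$ the connections $A_1(t)$ and $A_2(t)$ on $M$ have the same scattering data, and under the stated geometric hypotheses this forces gauge equivalence --- on a simple surface this is the known injectivity (up to gauge) of the non-abelian geodesic ray transform, and in dimension $m\ge3$ with strictly convex boundary and a smooth strictly convex function it follows from the corresponding matrix-weighted ray transform results obtained via the convex foliation method. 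Thus for each $t$ there is $G(t):M\to U(n)$ with $G(t)\vert_{\d M}=\Id$ and $A_2(t)=G(t)^{-1}A_1(t)G(t)+G(t)^{-1}dG(t)$; since the gauge produced by these arguments is canonical (it solves a transport equation built from the $A_j$ along geodesics), it inherits smooth dependence on $t$, and checking this joint smoothness is one point that needs care.

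Set $V_1':=G^{-1}V_1G+iG^{-1}\d_tG$. As $V_1$ is Hermitian and $G$ unitary, $G^{-1}V_1G$ is Hermitian and, differentiating $G^{-1}G=\Id$ in $t$, so is $iG^{-1}\d_tG$; hence $V_1'$ is again a potential. By the gauge invariance recalled in the introduction $(A_1,V_1)$ and $(A_2,V_1')$ are gauge equivalent, so $\Lambda_{A_2,V_1'}=\Lambda_{A_1,V_1}=\Lambda_{A_2,V_2}$, and writing $A:=A_2$ it suffices to show that $\Lambda_{A,V_1'}=\Lambda_{A,V_2}$ forces $V_1'=V_2$ --- this then gives $(A_1,V_1)\sim(A_2,V_2)$ through $G$. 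To this end let $u_1,u_2$ solve (\ref{1.1}) for $(A,V_1')$ and $(A,V_2)$ respectively, with the same Dirichlet data $f$; then $w:=u_1-u_2$ satisfies $(i\d_t+\Delta_A+V_2)w=-(V_1'-V_2)u_1$ with $w\vert_{t=0}=0$, $w\vert_{\d M}=0$ and, by hypothesis, $\nabla^A_\nu w\vert_{\d M}=0$. Testing against any $p$ solving $(i\d_t+\Delta_A+V_2)p=0$ with $p\vert_{t=T}=0$ and integrating by parts --- all boundary contributions in $t$ and on $\d M$ vanish --- yields the integral identity $\int_0^T\int_M\pair{(V_1'-V_2)u_1,p}_E\,dVdt=0$.

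Into this identity I insert geometric optics solutions of the form $u=e^{i\lambda\psi(x)-i\lambda^2 t}\big(a_0+O(\lambda^{-1})\big)+r_\lambda$, where $\psi$ solves the eikonal equation $\abs{d\psi}_g^2=1$ --- realised by a distance function whose gradient flow foliates $M$ by geodesics, available by simplicity and, in the higher-dimensional case, through the strictly convex function. Reading off the coefficient of $\lambda$ in $\Delta_A u$ using (\ref{CLL}) shows that the principal amplitude $a_0$ solves a first-order transport equation along the $\nabla\psi$-geodesics whose zeroth-order part contains the connection form evaluated on the velocity; hence $a_0(t,x)=R(x)\,U_A(x)\,c(t)$, where $R$ is a scalar factor chosen to cancel the volume Jacobian of the geodesic coordinates, $U_A(x)$ is the $\nabla^A$-parallel transport along the geodesic of the foliation reaching $x$, and $c$ is free. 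Solving finitely many further transport equations makes the source of $r_\lambda$ of order $O(\lambda^{-N})$ for any prescribed $N$, so that $\norm{r_\lambda}=O(\lambda^{-N})$ by the well-posedness and energy estimates for the vector Schr\"odinger initial--boundary value problem; and $c$ is chosen to vanish to second order at $t=0$ for $u_1$ (which enforces $f\vert_{t=0}=\d_tf\vert_{t=0}=0$ and $u_1\vert_{t=0}=0$) and to vanish at $t=T$ for $p$. Since $u_1$ and $p$ carry the same phase, the $\lambda^2$ time-oscillations and the $\lambda$ spatial-oscillations cancel in $\pair{(V_1'-V_2)u_1,p}_E$, and letting $\lambda\to\infty$ leaves $\int_0^T\int_M\pair{(V_1'-V_2)\,U_A c_1,\,U_A c_2}_E\,dVdt=0$. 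Letting the geometric data ($\psi$, and the vectors $c_1,c_2\in\C^n$) and the time cutoffs vary, this says that the attenuated geodesic ray transform of $(V_1'-V_2)(t,\cdot)$ --- with attenuation by the unitary connection $\operatorname{ad}_A$ on $\End(\C^n)$ --- annihilates a rich family of cutoffs in $t$, hence, by continuity of $t\mapsto(V_1'-V_2)(t,\cdot)$, vanishes at every $t\in(0,T)$. By injectivity of the attenuated geodesic ray transform for a unitary connection --- once more on simple surfaces and on manifolds admitting a smooth strictly convex function --- $(V_1'-V_2)(t,\cdot)\equiv0$, so $V_1'=V_2$ and the theorem follows.

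I expect the main obstacle to be this geometric optics construction: one must build $u_1$ and $p$ accurately enough that $r_\lambda=o(1)$ --- which in the vector-valued setting means solving a hierarchy of transport equations whose principal solution is operator-valued and governed by $\nabla^A$-parallel transport --- while keeping enough freedom (the free time-amplitude $c$, the vector data, the choice of $\psi$) to reconstruct $V_1'-V_2$ at \emph{every} time of $(0,T)$ despite the bounded length of the geodesics and the $e^{-i\lambda^2 t}$ oscillation; this is exactly where the known scalar argument must be fused with the connection structure, and where the energy estimates for the vector-valued Schr\"odinger equation enter. A secondary difficulty, already flagged, is extracting a gauge $G$ that is jointly smooth in $(t,x)$ from the pointwise-in-$t$ scattering rigidity.
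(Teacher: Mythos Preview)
Your overall architecture matches the paper's exactly: deduce $C_{A_1}=C_{A_2}$ from Theorem~\ref{t1}, upgrade this to gauge equivalence of the connections via injectivity of an attenuated ray transform, gauge so that the connections agree, and then use oscillatory solutions plus a second application of ray-transform injectivity (now for the $\mathrm{ad}_A$ connection on $\End(\C^n)$) to kill the potential difference. The integral identity you write down, the role of the parallel transport $U_A$ in the principal amplitude, and the reduction of the limit to an attenuated ray transform are all exactly what the paper does in Sections~4--5.

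There is, however, a genuine gap in your geometric optics step for case~(ii). You propose a global WKB ansatz $e^{i\lambda\psi-i\lambda^2 t}(a_0+\cdots)$ with $\psi$ a smooth solution of $\abs{d\psi}_g^2=1$, ``realised by a distance function \ldots\ and, in the higher-dimensional case, through the strictly convex function.'' But a strictly convex function is \emph{not} an eikonal solution, and the hypothesis in~(ii) --- strictly convex boundary together with the existence of a smooth strictly convex function --- does \emph{not} rule out conjugate points; it is used only in the layer-stripping proof of ray-transform injectivity, not in constructing phases. On such manifolds smooth global distance functions need not exist, and your WKB construction breaks down. The paper circumvents this by building \emph{Gaussian beam} quasimodes (Proposition~\ref{edit_p3}): one works in Fermi coordinates along a single non-tangential geodesic, takes a complex phase $\Psi=r+\tfrac12 H(r)y\cdot y+O(\abs{y}^3)$ with $\Imag H>0$ solving a matrix Riccati equation, and glues local pieces across self-intersections. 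This localisation is what allows the argument to go through without simplicity. For case~(i) your global-phase approach would work, but for case~(ii) you need the Gaussian beam construction (or something equivalent).

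On your secondary worry about joint smoothness of $G$ in $(t,x)$: the paper handles the $x$-regularity and base-point dependence by writing $G=U_{A_1}U_{A_2}^{-1}$ on $SM$, reading $(G-\Id)$ as the solution of $X(G-\Id)+B(G-\Id)=A_2-A_1$ with zero boundary data (so $I_B(A_1-A_2)=0$), and then invoking \cite{C3} or \cite{C1} to get $A_2-A_1=\nabla^B p$ for a smooth $p:M\to\C^{n\times n}$, whence $G=\Id+p$ depends only on $x$ and is smooth there. Smooth dependence on $t$ is inherited from the smooth $t$-dependence of $A_1,A_2$ through this construction; the paper does not spell this out further, so your instinct that it needs a word is fair, but the mechanism is the one you anticipated.
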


These results are, as far as the author is aware, the first dealing with the recovery of coefficients appearing in the dynamic Schr\"odinger equation on a vector bundle. In fact, the above results are the first showing recovery of time-dependent coefficients of any linear second-order partial differential equation with variable coefficients of leading order, in the vector-valued case. The proof of these results relies on the construction of Gaussian beam solutions which allow recovery of the scattering data corresponding to the connection form. This data is then used to recover the connection form and potential of the Schr\"odinger equation via the inversion of attenuated ray-transforms with matrix-weights corresponding to the connection form and potential we wish to recover. This last step relies on the results of \cite{C3} and \cite {C1}, which guarantee that the appropriate attenuated ray-transform is invertible when the base manifold is either i) two-dimensional and simple or ii) of higher dimension with strictly convex boundary and admits a smooth strictly convex function.\\

Here follows an outline of the present work. In section 2, we give some regularity results for the forward problem and for the Neumann trace. In section 3 we construct special Gaussian beam solutions for the Schr\"odinger equation. The proofs of Theorems \ref{t1} and \ref{t2} are given in sections 4 and 5 respectively.\\

\section{The Forward Problem}

Before we proceed, let us define for all $r,s\in(0,\infty)$ and for $X=M$ or $X=\d M$ the energy spaces $H^{r,s}((0,T)\times X;E)=H^r(0,T;L^2(X;E))\cap L^2(0,T;H^s(X;E))$, together with the associated norm\[\norm{u}^2_{H^{r,s}((0,T)\times X;E)}=\norm{u}^2_{H^r(0,T;L^2(X;E))}+\norm{u}^2_{L^2(0,T;H^s(X;E))}.\]

The main result of this section is the following well-posedness result for the Dirichlet-to-Neumann map:

\begin{proposition}\label{edit_p1}
The IBVP (\ref{1.1}) has a unique solution $u\in C^\infty((0,T)\times M;E)$. Further, there exists a constant $C>0$ such that the associated Dirichlet-to-Neumann map satisfies the estimate\begin{equation}\label{edit_DN}\norm{\Lambda_{A,V}f}_{L^2((0,T)\times\d M;E)}\leq C\norm{f}_{H^{\frac{9}{4},\frac{3}{2}}((0,T)\times\d M;E)}.\end{equation}
\end{proposition}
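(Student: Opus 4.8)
The plan is to treat \eqref{1.1} as a Schrödinger-type evolution equation with a time-dependent, second-order operator, and to reduce to the homogeneous-boundary case by lifting the Dirichlet data. First I would choose, for $f\in C^\infty((0,T)\times\d M;E)$ with $f|_{t=0}=\d_tf|_{t=0}=0$, a smooth extension $F\in C^\infty((0,T)\times M;E)$ with $F|_{(0,T)\times\d M}=f$ and $F|_{t=0}=\d_tF|_{t=0}=0$, depending continuously on $f$ in the relevant boundary norms via a standard trace-extension operator (one gains half a derivative: $F$ is controlled in $H^{r+1/2}$ on $M$ by $f$ in $H^{r}$ on $\d M$ in the tangential scale, with matching time regularity). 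Setting $w=u-F$, the problem becomes $i\d_tw+\Delta_{A(t)}w+V w = -(i\d_tF+\Delta_{A(t)}F+VF)=:h$ with $w|_{\d M}=0$ and $w|_{t=0}=0$, where $h\in C^\infty$ vanishes to first order at $t=0$. Because $\Delta_{A(t)}$ with the gauge term $2(A,d\cdot)_g-(d^\ast A)+(A,A\cdot)_g$ from \eqref{CLL} is, for each fixed $t$, a self-adjoint, lower-semibounded elliptic operator on $L^2(M;E)$ with domain $H^2\cap H^1_0$, and the coefficients depend smoothly on $t$, the family $\{i\Delta_{A(t)}+iV(t)\}$ generates a strongly continuous evolution system $\mathcal U(t,s)$ on $L^2(M;E)$ (and on the scale of $H^1_0$, $H^2\cap H^1_0$); this is the classical theory of Kato/Schechter for time-dependent Hamiltonians, and it gives $w(t)=-i\int_0^t\mathcal U(t,s)h(s)\,ds$, hence existence and uniqueness of $w$, and then of $u=w+F$.

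Smoothness of $u$ up to $\{t=0\}$ and up to the boundary follows by a bootstrap: differentiating \eqref{1.1} in $t$, one sees that $\d_t^k u$ solves an equation of the same form with source built from lower-order $t$-derivatives of $F$ and of the coefficients, and compatible (vanishing) data at $t=0$ thanks to the conditions $f|_{t=0}=\d_tf|_{t=0}=0$ together with the equation itself (the higher-order compatibility conditions are automatically satisfied because the initial datum is $0$ and $f$ vanishes to first order). Elliptic regularity for $\Delta_{A(t)}+V$ on $M$ then upgrades spatial regularity: from $\d_t^k u\in L^2$ one gets $u\in H^{2k}$ in space, uniformly on compact time intervals, and Sobolev embedding yields $u\in C^\infty((0,T)\times M;E)$. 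This is the standard parabolic-style/Schrödinger smoothing-by-compatibility argument and I would only sketch it.

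For the quantitative bound \eqref{edit_DN}, I would track norms through the above construction. The energy estimate for the evolution system gives, for suitable $r$,
\[
\norm{w}_{L^\infty(0,T;H^{1}_0(M;E))}\ \lesssim\ \norm{h}_{L^1(0,T;H^1(M;E))}\ \lesssim\ \norm{F}_{H^{1}(0,T;H^1(M;E))}+\norm{F}_{L^1(0,T;H^{3}(M;E))},
\]
and by interpolation and the extension operator this is controlled by $\norm{f}_{H^{r,s}((0,T)\times\d M;E)}$ for $(r,s)$ of the stated order. The Neumann trace $\nabla^A_\nu u|_{\d M}=\d_\nu w|_{\d M}+\nabla^A_\nu F|_{\d M}$ is then estimated by the trace theorem: $\d_\nu w|_{(0,T)\times\d M}$ lies in $L^2$ provided $w\in L^2(0,T;H^{3/2+\epsilon}(M;E))$, which requires controlling $w$ in a slightly better spatial scale than $H^1$; this costs extra derivatives on $h$, hence on $F$, hence on $f$, and a careful accounting of the interpolation exponents (time regularity traded for space regularity through the equation $i\d_tw=-\Delta_{A(t)}w-\ldots-h$, which converts two spatial derivatives into one time derivative) is what produces the specific indices $\tfrac94$ and $\tfrac32$ in \eqref{edit_DN}. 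The main obstacle is precisely this bookkeeping: one must choose the intermediate energy spaces $H^{r,s}((0,T)\times M;E)$ so that (a) the evolution system is bounded on them, (b) the extension operator maps $H^{9/4,3/2}$ on $\d M$ into them, and (c) the normal-derivative trace from them lands in $L^2((0,T)\times\d M;E)$; verifying that $(9/4,3/2)$ is consistent with all three, using that a Schrödinger evolution costs "two space derivatives per time derivative," is the delicate point, whereas existence, uniqueness and $C^\infty$ regularity are comparatively routine.
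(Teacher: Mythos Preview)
Your strategy---lift the Dirichlet data to an interior function, reduce to a homogeneous-boundary source problem, solve that, and estimate the Neumann trace---is exactly the paper's. The paper likewise cites standard theory (Galerkin rather than Kato evolution systems) for existence/uniqueness and does not belabour the $C^\infty$ regularity.

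Where your proposal has a real gap is precisely the part you flag as ``bookkeeping'': you do not actually establish \eqref{edit_DN}, and the estimates you write down are not quite the right ones. The inequality $\norm{w}_{L^\infty(0,T;H^1_0)}\lesssim\norm{h}_{L^1(0,T;H^1)}$ would require the evolution to be bounded on $H^1_0$, but $h=-(i\d_t+\Delta_A+V)F$ does not vanish on $\d M$, so it does not lie in $H^1_0$; and $H^1$ in space is in any case not enough for the normal trace to land in $L^2$ of the boundary. The paper resolves this cleanly by proving (as a separate proposition) the energy estimate
\[
\norm{w}_{H^{1,2}((0,T)\times M;E)}\leq C\norm{h}_{H^{1,0}((0,T)\times M;E)}
\]
for the homogeneous-boundary source problem, obtained by pairing the equation with $w$ and with $\d_t w$ and using Gr\"onwall. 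This needs only $h\in H^1(0,T;L^2(M))$, with no boundary condition on $h$. Since $h$ is a second-order operator applied to the extension $\Phi$, one has $\norm{h}_{H^{1,0}}\lesssim\norm{\Phi}_{H^{3,2}}$, and the Lions--Magenes trace/extension theory gives an extension with $\norm{\Phi}_{H^{3,2}((0,T)\times M)}\lesssim\norm{f}_{H^{9/4,3/2}((0,T)\times\d M)}$; this is where the specific exponents $(9/4,3/2)$ come from, not from an ad hoc interpolation. Finally $\norm{\nabla^A_\nu u}_{L^2((0,T)\times\d M)}\lesssim\norm{u}_{H^{1,2}((0,T)\times M)}$ since $u\in L^2(0,T;H^2(M))$. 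So the chain is $H^{9/4,3/2}\to H^{3,2}\to H^{1,0}\to H^{1,2}\to L^2$, and the middle step is an honest energy estimate you would need to supply rather than defer.
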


The proof of the above result essentially reduces to proving a suitable energy estimate for the source problem for the Schr\"odinger equation. Thus, for $F\in C^\infty((0,T)\times M;E)$ we consider the solution of the source problem\begin{equation}\begin{split}\label{s2.1}\big(i\d_t+\Delta_A+V\big)u&=F(t,x)\textrm{ in }(0,T)\times M,\\u(t,x)&=0\textrm{ on }(0,T)\times\d M,\\u(0,x)&=0\textrm{ in }M.\end{split}\end{equation}

\begin{proposition}\label{edit_p2}
The source problem (\ref{s2.1}) satisfies the energy estimates\begin{align*}\norm{u}_{L^\infty(0,T;L^2(M;E))}\leq \norm{F}_{L^2((0,T)\times M;E)} \\ \norm{u}_{L^\infty(0,T;H^1(M;E))}\leq \norm{F}_{H^{1,0}((0,T)\times M;E)} \\ \norm{\d_tu}_{L^\infty(0,T;L^2(M;E))}\leq \norm{F}_{H^{1,0}((0,T)\times M;E)} \\ \norm{u}_{H^{1,2}((0,T)\times M;E)}\leq C\norm{F}_{H^{1,0}((0,T)\times M;E)}.\end{align*}
\end{proposition}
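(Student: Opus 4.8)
The plan is to establish the four bounds as a priori estimates, producing the solution itself in the process via the standard Galerkin scheme: expand $u(t,\cdot)$ in the Dirichlet eigenbasis of the Laplace--Beltrami operator $-d^\ast d$ (acting on $\C^n$-valued functions componentwise, using that $E$ is trivial), project \eqref{s2.1} onto the span of the first $N$ eigensections, solve the resulting linear ODE system for the coefficients on all of $[0,T]$, derive the estimates below uniformly in $N$, and pass to the limit --- weak-$\ast$ limits together with weak lower semicontinuity give the estimates for $u$, while strong convergence lets one pass to the limit in the equation and recover the Dirichlet and initial conditions. Every energy identity below survives the projection, since $\Delta_A=-(\nabla^A)^\ast\nabla^A$ is symmetric, $V(t)$ is Hermitian, and the $L^2$-projection is self-adjoint; accordingly I will write the computations as for a genuine smooth solution and suppress the projection bookkeeping. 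The constants produced depend on $T$, on $(M,g)$, and on $C^1$-bounds for $A$ and $V$.

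For the first bound I would pair \eqref{s2.1} with $u$ in $L^2(M;E)$ and take imaginary parts. Since $u\vert_{\d M}=0$, Green's formula gives $\pair{\Delta_Au,u}_{L^2(M)}=-\norm{\nabla^Au}_{L^2(M)}^2\in\R$, while $\pair{Vu,u}_{L^2(M)}\in\R$ because $V$ is Hermitian; hence $\tfrac12\d_t\norm{u(t)}_{L^2(M)}^2=\Imag\pair{F(t),u(t)}_{L^2(M)}$, so $\d_t\norm{u(t)}_{L^2(M)}\le\norm{F(t)}_{L^2(M)}$ and, using $u(0)=0$, $\norm{u(t)}_{L^2(M)}\le\int_0^t\norm{F(s)}_{L^2(M)}\,ds$, giving the first estimate. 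For the $H^1$ bound I would instead pair \eqref{s2.1} with $\d_tu$ and take real parts, using $\nabla^{A(t)}\d_tu=\d_t(\nabla^Au)-(\d_tA)u$ and, for the potential term, that $V(t)$ and $\d_tV(t)$ are Hermitian; this yields the identity
\[\tfrac12\d_t\big(\norm{\nabla^Au(t)}_{L^2(M)}^2-\pair{V(t)u(t),u(t)}_{L^2(M)}\big)=\Real\pair{\nabla^Au,(\d_tA)u}_{L^2(M)}-\tfrac12\pair{(\d_tV)u,u}_{L^2(M)}-\Real\pair{F,\d_tu}_{L^2(M)}.\]
Integrating over $(0,t)$, integrating the last term by parts in $t$ (only the endpoint $\Real\pair{F(t),u(t)}_{L^2(M)}$ survives, as $u(0)=0$), and estimating the right-hand side by Cauchy--Schwarz and Young's inequality --- absorbing an $\varepsilon\norm{\nabla^Au}_{L^2(M)}^2$ term, using the already-proved $L^2$ bound, and using the $1$-D trace inequality $\norm{F(t)}_{L^2(M)}\le C\norm{F}_{H^1(0,T;L^2(M))}$ --- gives $\norm{\nabla^Au(t)}_{L^2(M)}^2\le C\norm{F}_{H^{1,0}}^2+C\int_0^t\norm{\nabla^Au(s)}_{L^2(M)}^2\,ds$; Gr\"onwall, together with $\norm{u}_{H^1(M)}\le C(\norm{\nabla^Au}_{L^2(M)}+\norm{u}_{L^2(M)})$, then gives the second estimate.

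For the third bound I would differentiate \eqref{s2.1} in $t$ and set $v=\d_tu$, which solves $(i\d_t+\Delta_{A(t)}+V)v=\d_tF-(\d_t\Delta_{A(t)})u-(\d_tV)u$ with $v\vert_{\d M}=0$; evaluating \eqref{s2.1} at $t=0$ and using $u(0)=0$ gives $v(0)=-iF(0,\cdot)$. Applying the $L^2$ energy identity to $v$, noting that $\d_t\Delta_{A(t)}$ is a first-order operator (differentiate \eqref{CLL} in $t$), so that $\norm{(\d_t\Delta_{A(t)})u(s)}_{L^2(M)}+\norm{(\d_tV)u(s)}_{L^2(M)}\le C\norm{u(s)}_{H^1(M)}$, and using the trace bound $\norm{v(0)}_{L^2(M)}=\norm{F(0,\cdot)}_{L^2(M)}\le C\norm{F}_{H^{1,0}}$ together with the second estimate to control $\int_0^t\norm{u(s)}_{H^1(M)}\,ds$, one obtains $\norm{\d_tu}_{L^\infty(0,T;L^2(M))}\le C\norm{F}_{H^{1,0}}$. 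Finally, the $H^1(0,T;L^2(M))$ component of the $H^{1,2}$ norm is immediate from the first and third estimates; for the $L^2(0,T;H^2(M))$ component I would rewrite \eqref{s2.1} as $-d^\ast du(t)=F(t)-i\d_tu(t)-V(t)u(t)-\big(\Delta_{A(t)}+d^\ast d\big)u(t)$, whose right-hand side is bounded in $L^2(M)$ by $C(\norm{F(t)}_{L^2(M)}+\norm{\d_tu(t)}_{L^2(M)}+\norm{u(t)}_{H^1(M)})$ since $\Delta_{A(t)}+d^\ast d$ is first order, and then invoke elliptic regularity for the Dirichlet problem of $-d^\ast d$ (recall $u(t)\in H^1_0$) to get $\norm{u(t)}_{H^2(M)}\le C(\norm{F(t)}_{L^2(M)}+\norm{\d_tu(t)}_{L^2(M)}+\norm{u(t)}_{H^1(M)})$; squaring, integrating over $(0,T)$, and applying the first three estimates completes the proof.

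The step I expect to be the main obstacle is the $H^1$ estimate: it is not a conservation law, the first-order part of $\Delta_A$ and the time-derivatives $\d_tA$, $\d_tV$ contribute terms with no favourable sign, and closing it requires the Young-plus-Gr\"onwall device above, the integration by parts in $t$ on $\pair{F,\d_tu}$, and the time-trace inequality for $F$; the subsequent $\d_tu$ estimate then hinges on the delicate point that the induced initial datum $\d_tu(0)=-iF(0,\cdot)$ is nonzero and must be absorbed into $\norm{F}_{H^{1,0}}$ via that same trace inequality. The remaining care is purely technical: making the Galerkin passage to the limit rigorous, for which the uniform-in-$N$ bounds and weak-$\ast$ compactness suffice.
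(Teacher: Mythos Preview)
Your proposal is correct and follows essentially the same route as the paper: pairing with $u$ and taking imaginary parts for the $L^2$ bound, pairing with $\partial_t u$ and taking real parts (with the integration by parts in $t$ on $\langle F,\partial_t u\rangle$ and the time-trace inequality for $F$) for the $H^1$ bound, differentiating the equation in $t$ and reapplying the $L^2$ energy identity for the $\partial_t u$ bound, and then using elliptic regularity for the $H^{1,2}$ bound. You are, if anything, more careful than the paper about the nonzero initial datum $\partial_t u(0)=-iF(0,\cdot)$ and about the Galerkin justification, both of which the paper leaves implicit.
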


\begin{proof}
By taking the inner product of (\ref{s2.1}) with $u$ and integrating by parts, we deduce that\begin{equation}\label{s2.2}i\int_M\pair{\d_tu,u}_EdV_g-\int_M\pair{\nabla^Au,\nabla^Au}_EdV_g+\int_M\pair{Vu,u}_EdV_g=\int_M\pair{F,u}_EdV_g\end{equation}Taking the imaginary part of (\ref{s2.2}) yields\[\frac{d}{dt}\Big(\norm{u(t)}_{L^2(M;E)}^2\Big)\leq C\Big(\norm{F(t,\cdot)}_{L^2(M;E)}\norm{u(t)}_{L^2{(M;E)}}+\norm{u(t)}_{L^2(M;E)}^2\Big).\]Then Gr\"onwall's inequality tells us that\begin{equation}\label{s2.3}\norm{u}_{L^\infty(0,T;L^2(M;E))}\leq C\norm{F}_{L^2((0,T)\times M;E)}.\end{equation}

On the other hand, taking the inner product of (\ref{s2.1}) with $\d_tu$, we can integrate by parts to deduce that\begin{equation}\label{s2.4}i\int_M\pair{\d_tu,\d_tu}_EdV_g-\int_M\pair{\nabla^Au,\nabla^A\d_tu}_EdV_g+\int_M\pair{Vu,\d_tu}_EdV_g=\int_M\pair{F,\d_tu}_EdV_g.\end{equation}Then, by setting\begin{gather*}\alpha(t;u,v)=\int_M\pair{\nabla^Au,\nabla^Av}_EdV_g-\int_M\pair{Vu,v}_EdV_g\end{gather*}and\begin{gather*}\alpha'(t;u,u)=\int_M\pair{(\d_tA)u,\nabla^Au}_EdV_g+\int_M\pair{\nabla^Au,(\d_tA)u}_EdV_g-\int_M\pair{(\d_tV)u,u}_EdV_g,\end{gather*}we can take the real part of (\ref{s2.4}) to conclude that\[\frac{d}{ds}\alpha(s;u,u)=\alpha'(s;u,u)-2\,\Real\int_M\pair{F(s,\cdot),\d_tu(s)}_EdV_g.\]

By integrating, we can rewrite the above as\begin{equation}\label{s2.5}\alpha(t;u,u)=\int_0^t\alpha'(s;u,u)ds-2\,\Real\int_0^t\pair{F(s,\cdot),\d_tu(s)}_{L^2(M;E)}ds,\end{equation}and since $\int_0^t\pair{F(s,\cdot),\d_tu(s)}_{L^2(M;E)}ds=\pair{F(t,\cdot),u(t)}_{L^2(M;E)}-\int_0^t\pair{\d_tF(s,\cdot),u(s)}_{L^2(M;E)}ds$, we can rewrite the identity (\ref{s2.5}) in the form\begin{equation}\label{s2.6}\alpha(t;u,u)\leq\int_0^t\alpha'(s;u,u)ds+2\norm{F(t)}_{L^2(M;E)}\norm{u(t)}_{L^2(M;E)}+2\int_0^t\norm{\d_tF(s,\cdot)}_{L^2(M;E)}\norm{u(s)}_{L^2(M;E)}ds.\end{equation}

Further, by expanding the inner products appearing in $\alpha(t;u,u)$ and using Cauchy's inequality with $\varepsilon=\frac{1}{2}$, we can deduce that\[\begin{split}\frac{1}{2}\norm{\nabla u(t)}_{L^2(M;E\otimes T^\ast M)}^2\leq\alpha(t;u,u)+\norm{V(t)}_{L^\infty(\C^{n \times n}\otimes T^\ast M)}\norm{u(t)}_{L^2(M;E)}^2\\+\norm{A(t)}_{L^\infty(\C^{n \times n}\otimes T^\ast M)}^2\norm{u(t)}_{L^2(M;E)}^2\end{split}\]whence\[\alpha(t;u,u)+\lambda\norm{u(t)}_{L^2(M;E)}^2\geq\frac{1}{2}\norm{u(t)}_{H^1(M;E)}^2,\] for $\lambda=\frac{1}{2}+\norm{V(t)}_{L^\infty(M;\C^{n\times n}\otimes T^\ast M)}+\norm{A(t)}_{L^\infty(M;\C^{n\times n}\otimes T^\ast M)}^2$. Then, combining the above with identity (\ref{s2.6}) and the definition of $\alpha'(t;u,v)$, we may deduce that $\norm{u(t)}_{H^1(M;E)}^2$ is bounded above by\begin{equation}\label{edit_2.8}2\lambda\norm{u(t)}_{L^2(M;E)}^2+4\norm{F(t)}_{L^2(M;E)}\norm{u(t)}_{L^2(M;E)}+C\Big(\int_0^t\norm{u(s)}_{H^1(M;E)}^2+\norm{\d_tF(s,\cdot)}_{L^2(M;E)}^2ds\Big).\end{equation}Let us briefly recall the inequality\[\norm{F(t,\cdot)}_{L^2(M;E)}^2=2\Real\int_0^t\pair{F(s,\cdot),\d_tF(s,\cdot)}_{L^2(M;E)}ds\leq\int_0^t\Big(\norm{F(s,\cdot)}_{L^2(M;E)}^2+\norm{\d_tF(s,\cdot)}_{L^2(M;E)}^2\Big)ds.\] Using the above in (\ref{edit_2.8}), together with Cauchy's inequality and the estimate (\ref{s2.3}), we deduce that\begin{equation}\label{s2.7}\norm{u(t)}_{H^1(M;E)}^2\leq C\Big(\int_0^t\norm{u(s)}_{H^1(M;E)}^2ds+\norm{F}_{H^{1,0}((0,T)\times M;E)}^2\Big).\end{equation}Then an application of Gr\"onwall's inequality tells us that\begin{equation}\label{s2.8}\norm{u}_{L^\infty(0,T;H^1(M;E))}\leq C\norm{F}_{H^{1,0}((0,T)\times M;E)}.\end{equation}

For the next estimate, we begin by applying $\d_t$ to (\ref{s2.1}). Using the expression (\ref{CLL}) for the connection Laplacian, we deduce that\begin{equation}\label{s2.9}(i\d_t+\Delta_A+V)\d_tu=\d_tF-2(\d_tA,du)_g+(\d_td^\ast A)u-(\d_tA,Au)_g-(A,(\d_tA)u)_g-(\d_tV)u.\end{equation}We now apply the estimate (\ref{s2.3}) to $\d_tu$, replacing $F$ appearing in (\ref{s2.3}) by the right-hand side of (\ref{s2.9}). We deduce, therefore, that\[\begin{split}&\norm{\d_tu}_{L^\infty(0,T;L^2(M;E))}\\\leq& C\norm{\d_tF-2(\d_tA,du)_g+(\d_td^\ast A)u-(\d_tA,Au)_g-(A,(\d_tA)u)_g-(\d_tV)u}_{L^2((0,T)\times M;E)}.\end{split}\]Using the estimates (\ref{s2.3}) and (\ref{s2.8}), on the right-hand side of the above, we observe that\begin{equation}\label{s2.10}\norm{\d_tu}_{L^\infty(0,T;L^2(M;E))}\leq C\norm{F}_{H^{1,0}((0,T)\times M;E).}\end{equation}

Lastly, we rearrange (\ref{s2.1}) to obtain \begin{equation}\begin{split}(\Delta_A+V)u=&\ F-i\d_tu\textrm{ in }(0,T)\times M\\u=&\ 0\textrm{ on }(0,T)\times\d M.\end{split}\end{equation}Then the bounds (\ref{s2.3}), (\ref{s2.8}) and (\ref{s2.10}) immediately imply the desired energy estimate\begin{equation}\label{NRGest}\norm{u}_{H^{1,2}((0,T)\times M;E)}\leq C\norm{F}_{H^{1,0}((0,T)\times M;E).}\end{equation}\end{proof}

We now turn to the proof of Proposition \ref{edit_p1}.

\begin{proof}[Proof of Proposition \ref{edit_p1}]

The unique solvability of the IBVP (\ref{1.1}) can be established in a similar manner to the scalar valued case, using the energy estimates of Proposition \ref{edit_p2}. Existence and uniqueness can then be proven using, for example, the Galerkin approach (see e.g. \cite[Theorem 2.3]{22} or \cite[Section 3, Theorem 10.1]{23}).\\

We now turn to establishing the bound (\ref{edit_DN}) for the Dirichlet-to-Neumann map. Recall the initial and boundary value problem (\ref{1.1}):\[\begin{split}(i\d_t+\Delta_{A}+V)u&=0\textrm{ in }(0,T)\times M,\\u(t,x)&=f\textrm{ on }(0,T)\times\d M,\\u(0,x)&=0\textrm{ in }M,\end{split}\]where the inhomogeneous Dirichlet data is given by $f\in C^\infty((0,T)\times\d M;E)$ satisfying $f\vert_{t=0}=\d_tf\vert_{t=0}=0$.\\

Note that we can find $\Phi\in C^\infty((0,T)\times M;E)$ such that\[\Phi(0,\cdot)=\d_t\Phi(0,\cdot)=0\textrm{ in M},\qquad\Phi=f\textrm{ on }\d M,\]and\begin{equation}\norm{\Phi}_{H^{3,2}((0,T)\times M;E)}\leq C\norm{f}_{H^{\frac{9}{4},\frac{3}{2}}((0,T)\times \d M;E)},\end{equation} for some $C>0$, depending only on $M$ and $T$. See \cite[Chapter 4, Section 2]{23} for a proof of this fact in the scalar case. The proof for vectors is analogous and, therefore, omitted. From the above, it holds that\begin{equation}\label{s2.14}F:=-(i\d_t+\Delta_A+V)\Phi\end{equation}satisfies $F(0,\cdot)=0$ in $M$. Then, letting $v$ be the solution of (\ref{s2.1}) corresponding to the source term $F$ defined in (\ref{s2.14}), we see that $u=\Phi+v$ is a solution to (\ref{1.1}). Then, it follows by (\ref{NRGest}) that\[\norm{u}_{H^{1,2}((0,T)\times M;E)}\leq C\norm{f}_{H^{\frac{9}{4},\frac{3}{2}}((0,T)\times \d M;E)}\]and applying this estimate with $f=0$ implies that such a solution $u$ is unique. Finally, we observe that\[\norm{\Lambda_{A,V}f}_{L^2((0,T)\times\d M;E)}\leq\norm{u}_{H^{1,2}((0,T)\times M;E)}\leq C\norm{\Phi}_{H^{3,2}((0,T)\times M;E)}\leq C\norm{f}_{H^{\frac{9}{4},\frac{3}{2}}((0,T)\times \d M;E)}.\]\end{proof}

\section{Construction of Gaussian Beam Solutions}
In this section, we shall construct Gaussian beam solutions to the Schr\"odinger equation which concentrate along geodesics in the high frequency limit.\\

Let $(\widehat{M},g)$ be a closed manifold. Recall that for a geodesic segment $\gamma:(a,b)\rightarrow\widehat{M}$ with no closed loops, there exist only finitely many values of $r\in(a,b)$ for which $\gamma$ self-intersects at $\gamma(r)$. We begin by recording the following system of Fermi coordinates near a geodesic, which we shall later use to construct our Gaussian beam solutions.\\

\begin{nobreak}\begin{lemma}\label{Fermi}
Let $(\widehat{M},g)$ be a compact $m$-dimensional manifold without boundary, $m\geq2$, and assume that $\gamma:(a,b)\rightarrow\widehat{M}$ is a unit-speed geodesic with no closed loops. Given a closed sub-interval $[a_0,b_0]$ of $(a,b)$ such that $\gamma\vert_{[a_0,b_0]}$ self-intersects only at $\gamma(r_j)$ with $a_0<r_1<\cdots<r_K<b_0$, and setting $r_0=a_0$, $r_{N+1}=b_0$, there exists an open cover $\{U_j,\phi_j\}_{j=0}^{K+1}$ of $\gamma([a_0,b_0])$ consisting of coordinate neighbourhoods with the following properties:
\begin{itemize}
\item $\phi_j(U_j)=I_j\times B$, where $I_j$ are open intervals and $B=B(0,\delta')$ is an open ball in $\R^{m-1}$, where $\delta'$ can be taken arbitrarily small.
\item $\phi_j(\gamma(r))=(r,0)$ for $r\in I_j$
\item $r_j$ only belongs to $I_j$ and $\overline{I_j}\cap\overline{I_k}=\emptyset,$ unless $\abs{j-k}\leq1$
\item $\phi_j=\phi_k$ on $\phi_j^{-1}((I_j\cap I_k)\times B)$
\end{itemize}
Furthermore, the metric in these coordinates satisfies $g^{jk}\vert_{\gamma(r)}=\delta^{jk}$ and $\d_ig^{jk}\vert_{\gamma(r)}=0$.
\end{lemma}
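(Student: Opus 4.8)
The plan is to obtain each chart $(U_j,\phi_j)$ as a \emph{Fermi coordinate tube} around an embedded sub-arc of $\gamma$: such a chart is the inverse of a normal-exponential parametrisation $\Phi_j(r,y)=\exp_{\gamma(r)}\!\big(\sum_i y^i e_i^{(j)}(r)\big)$, where $\{e_i^{(j)}(r)\}_{i=1}^{m-1}$ is a parallel orthonormal frame of the normal space $\gamma'(r)^{\perp}$ along the relevant piece of $\gamma$. The only genuine work is to split $[a_0,b_0]$ into finitely many overlapping subintervals $I_0,\dots,I_{K+1}$, each containing at most one self-intersection time of $\gamma$, so that $\gamma$ restricted to the closure of each $I_j$ is an embedded arc and the associated tube is a genuine coordinate neighbourhood; the normalisation of the metric along $\gamma$ is then the classical property of Fermi coordinates.

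First I would fix the interval cover. Writing $r_0=a_0$, $r_{K+1}=b_0$ and $c_j=(r_{j-1}+r_j)/2$ for $j=1,\dots,K+1$, one has $c_j<r_j<c_{j+1}$. For a sufficiently small $\eta>0$, set
\[
I_0=[r_0,c_1+\eta),\qquad I_j=(c_j-\eta,\,c_{j+1}+\eta)\ \ (1\le j\le K),\qquad I_{K+1}=(c_{K+1}-\eta,\,r_{K+1}].
\]
It is elementary to check, for $\eta$ smaller than a constant depending on the gaps $r_{j+1}-r_j$, that these intervals cover $[a_0,b_0]$, that $\overline{I_j}\cap\overline{I_{j+1}}\neq\emptyset$ while $\overline{I_j}\cap\overline{I_k}=\emptyset$ for $|j-k|\ge 2$, and that $r_j$ is the only element of $\{r_0,\dots,r_{K+1}\}$ lying in $\overline{I_j}$. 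This last point is decisive: if $\gamma\vert_{\overline{I_j}}$ self-intersected at some $\gamma(r)=\gamma(r')$ with $r\neq r'$ in $\overline{I_j}$, then $r$ and $r'$ would both be self-intersection times of $\gamma\vert_{[a_0,b_0]}$, i.e.\ elements of $\{r_1,\dots,r_K\}\cap\overline{I_j}=\{r_j\}$, which is absurd; hence each $\gamma\vert_{\overline{I_j}}$ is an injective unit-speed curve, i.e.\ an embedded arc.

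Next I would construct the charts and verify the compatibility conditions. On $I_0$ choose any parallel orthonormal frame of $\gamma'(r)^{\perp}$, and inductively choose the frame on $I_{j+1}$ so that it agrees with the frame on $I_j$ at one point of the nonempty connected overlap $I_j\cap I_{j+1}$; since a parallel frame is determined by its value at a single point, $e_i^{(j)}=e_i^{(j+1)}$ on all of $I_j\cap I_{j+1}$, and no monodromy obstruction arises because the overlap pattern is a chain rather than a cycle. Define $\Phi_j$ as above on $I_j\times B(0,\delta')$. Applying the tubular neighbourhood theorem to the embedded arc $\gamma\vert_{\overline{I_j}}$, for every sufficiently small $\delta'$ the map $\Phi_j$ is a diffeomorphism onto an open set $U_j\supset\gamma(I_j)$; letting $\delta'$ be the minimum of the finitely many admissible radii yields a single radius, which may be shrunk at will, and we put $\phi_j=\Phi_j^{-1}$ and $B=B(0,\delta')$. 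Then $\phi_j(\gamma(r))=(r,0)$ and $\phi_j(U_j)=I_j\times B$ by construction; on $\phi_j^{-1}\big((I_j\cap I_k)\times B\big)$ with $|j-k|=1$ the two parametrisations coincide because the frames do, so $\phi_j=\phi_k$ there, while for $|j-k|\ge 2$ there is nothing to verify since $I_j\cap I_k=\emptyset$ (the sets $U_j,U_k$ may still meet in $\widehat{M}$ near a self-intersection of $\gamma$, but no agreement of charts is demanded there).

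Finally, the metric identities along $\gamma$ follow from the standard Fermi computation. At $y=0$ one has $\d_0\Phi_j=\gamma'(r)$ and $\d_i\Phi_j=e_i^{(j)}(r)$, so $g_{ab}\vert_{\gamma(r)}=\delta_{ab}$ and hence $g^{ab}\vert_{\gamma(r)}=\delta^{ab}$. For the first derivatives it suffices, by metric compatibility $\d_c g_{ab}=g_{ad}\Gamma^{d}_{cb}+g_{bd}\Gamma^{d}_{ca}$, to show that all Christoffel symbols vanish on $\gamma$: $\Gamma^{c}_{00}\vert_\gamma=0$ because $\gamma$ is a geodesic in the affine parameter $r$; $\Gamma^{c}_{0i}\vert_\gamma=\Gamma^{c}_{i0}\vert_\gamma=0$ because $\d_i$ restricted to $\gamma$ equals the parallel field $e_i^{(j)}$; and $\Gamma^{c}_{ij}\vert_\gamma=0$ for $i,j\ge 1$ because each segment $s\mapsto(r,sy)$ is a geodesic, so $\sum_{i,j}\Gamma^{c}_{ij}(r,sy)\,y^i y^j\equiv0$ and evaluating at $s=0$ for arbitrary $y$ forces the symmetric form $\Gamma^{c}_{ij}(\gamma(r))$ to vanish. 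Differentiating $g^{ab}=\delta^{ab}$ along $\gamma$ then gives $\d_i g^{jk}\vert_{\gamma(r)}=0$ as well. I expect the main obstacle to be purely organisational, namely arranging the cover so that simultaneously $[a_0,b_0]$ is covered, only $r_j$ lies in $\overline{I_j}$, non-adjacent closures are disjoint, and each $\gamma\vert_{\overline{I_j}}$ is embedded; once this is done, the existence of the tubes and the metric normalisation are routine.
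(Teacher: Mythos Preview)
Your argument is correct and follows the standard Fermi-coordinate construction. The paper itself does not prove this lemma at all: it simply refers the reader to \cite[Lemma~3.5]{C4}. What you have written is essentially a self-contained version of that cited argument---split $[a_0,b_0]$ into overlapping subintervals each containing at most one self-intersection time so that each sub-arc is embedded, build the normal-exponential tube from a parallel orthonormal frame chosen consistently across overlaps, and verify the vanishing of Christoffel symbols on $\gamma$ via the geodesic property of the radial curves. One cosmetic point: your $I_0$ and $I_{K+1}$ are half-open at the endpoints $a_0,b_0$, whereas the statement asks for open intervals; since $\gamma$ is defined on the larger open interval $(a,b)\supset[a_0,b_0]$ and $a_0,b_0$ are not self-intersection times, you can simply extend $I_0$ and $I_{K+1}$ slightly past $a_0$ and $b_0$ (shrinking $\eta$ and $\delta'$ if needed so that no new self-intersections enter the closures). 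With that adjustment, all four bullet points and the metric normalisation go through exactly as you wrote.
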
\end{nobreak}

\begin{proof}
See e.g. \cite[Lemma 3.5]{C4} for details.
\end{proof}

We now turn to the construction of the Gaussian beam solutions. We consider here a non-tangential unit-speed geodesic in $M$ given by $\gamma:[0,L]\rightarrow M$. That is, $\gamma'(0)$ and $\gamma'(L)$ are both non-tangential to $\d M$, and $\gamma(r)\in M^{int}$ for $0<r<L$. Note that this implies that the geodesic $\gamma$ is not a closed loop in $M$.\\

We may then embed $(M,g)$ in some closed manifold $\widehat{M}$, and extend $\gamma$ to $\widehat{M}$ as a unit-speed geodesic $\gamma:[-\varepsilon,L+\varepsilon]\rightarrow\widehat{M}$. Our aim is to construct a Gaussian beam solution near $\gamma([0,L])$. We fix a point $x_0$ on $\gamma$, and apply Lemma \ref{Fermi} on $\widehat{M}$ with $a_0<0$ and $b_0>L$ chosen so that $\gamma(a_0)$ and $\gamma(b_0)$ are in the interior of $\widehat{M}\setminus M$. This gives us a system of coordinates $(r,y)$ around $x_0=(r_0,0)$, defined in a set $U=\{(r,y):\abs{r-r_0}<\delta,\ \abs{y}<\delta'\}$ such that the geodesic near $x_0$ is given by $\Gamma=\{(r,0):\abs{r-r_0}<\delta\}$.\\

The main aim of the present section is to establish the following result.

\begin{proposition}\label{edit_p3}
Let $(M,g)$ be non-trapping, let $\gamma:[0,L]\rightarrow M$ be a non-tangential geodesic, and let $s\gg 1$. There exists a function $v\in C^\infty((0,T)\times M;E)$, supported in a tubular neighbourhood of $\gamma$, which is an approximate solution of the Schr\"odinger equation in the sense that\begin{equation}\label{edit_p3est}\norm{(i\d_t+\Delta_{A}+V)v}_{L^2((0,T)\times M;E)}=O(s^{-1}),\qquad\norm{v}_{L^2((0,T)\times M;E)}=O(1).\end{equation}Away from the self-intersections of $\gamma$, this approximate solution is given by $v(t,r,y)=e^{is(\Psi(r,y)-st)}a(t,r,y)$, where the phase function $\Psi$ has the form $\Psi(r,y)=r+\frac{1}{2}H(r)y\cdot y+O(\abs{y}^3)$ and the amplitude satisfies\[a(t,r,0)=c_0\tilde{\chi}(t)e^{-\frac{1}{2}\int_{r_0}^r\tr H(\tilde{r})d\tilde{r}}\ U_{A}w+O(s^{-1})\]for some choices of arbitrary constant $c_0$, compactly supported smooth function $\tilde{\chi}$ and initial vector $w$.
\end{proposition}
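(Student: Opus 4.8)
The plan is to construct the Gaussian beam by a standard WKB ansatz adapted to the vector-valued setting, working first in the Fermi coordinates of Lemma \ref{Fermi} away from self-intersections and then patching the local pieces together. Write the ansatz as $v(t,r,y)=e^{is(\Psi(r,y)-st)}a(s;t,r,y)$, where $\Psi$ is scalar-valued (as in the scalar magnetic Schr\"odinger case) and the amplitude $a$ is $E$-valued with an asymptotic expansion $a\sim a_0 + s^{-1}a_1 + \cdots$. Substituting into $(i\d_t + \Delta_A + V)v$ and using the local expression \eqref{CLL} for $\Delta_A$, I would expand in powers of $s$. The $s^2$-term forces the eikonal equation $g^{jk}\d_j\Psi\,\d_k\Psi = 1$ together with $\d_t$ contributing the $-st$ shift so that the time-derivative cancels the $s^2$ term correctly; solving this along $\Gamma$ with the quadratic ansatz $\Psi = r + \tfrac12 H(r)y\cdot y + O(|y|^3)$ reduces, as usual, to a matrix Riccati equation for $H(r)$, whose solvability with $\Imag H>0$ along the whole geodesic is the classical Gaussian beam construction (here the non-trapping and no-closed-loop hypotheses ensure we stay within one coordinate patch long enough, and Lemma \ref{Fermi}'s normalization $g^{jk}|_\gamma=\delta^{jk}$, $\d_i g^{jk}|_\gamma=0$ makes the leading behavior along $\Gamma$ transparent).

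The $s^1$-term is a transport equation for the leading amplitude $a_0$. This is where the connection enters: collecting the terms linear in $s$, one finds along $\Gamma$ an ODE of the schematic form $2\d_r a_0 + \big(\tr H(r)\big)a_0 + i\d_t a_0\cdot(\text{something}) + 2A(\gamma'(r))a_0 = 0$, i.e. a first-order linear ODE in $r$ (with $t$ as a parameter after separating the $\tilde\chi(t)$ factor) whose matrix coefficient is $A$ evaluated on the geodesic velocity. I would solve it by the integrating-factor $e^{-\frac12\int_{r_0}^r \tr H}$ for the scalar part and by the parallel-transport fundamental matrix $U_A$ from \eqref{PT2} for the $E$-valued part — precisely because $U_A$ solves $[\d_r + A(\gamma'(r))]U_A = 0$. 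This yields $a_0(t,r,0) = c_0\,\tilde\chi(t)\, e^{-\frac12\int_{r_0}^r \tr H(\tilde r)d\tilde r}\,U_A(r)\,w$, matching the claimed formula; off $\Gamma$ one extends $a_0$ to a smooth function of $y$ (choosing, say, its Taylor expansion in $y$ freely, or simply making it independent of $y$ to leading order times a cutoff in $|y|$). The lower-order amplitudes $a_1, a_2, \dots$ are then determined by inhomogeneous transport equations of the same type, solvable in the same way; one keeps enough terms that the residual is $O(s^{-1})$ in $L^2$ after accounting for the $s$-dependent Gaussian concentration factor $\sim s^{-(m-1)/4}$ in the $y$-directions, which also gives $\|v\|_{L^2}=O(1)$ once $c_0$ is normalized.

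Next I would handle the self-intersections. Near each $\gamma(r_j)$ the single phase/amplitude pair may fail to be globally defined, so following the standard device (as in \cite{C4} and the scalar references) I would run the above construction independently in each coordinate patch $U_j$ from Lemma \ref{Fermi}, obtaining local approximate solutions $v_j = e^{is(\Psi_j - st)}a_j$, and then define $v = \sum_j \chi_j v_j$ using a partition of unity $\{\chi_j\}$ subordinate to $\{U_j\}$, after checking that on overlaps $U_j\cap U_k$ the phases and leading amplitudes agree up to the relevant order (the phases agree because they solve the same eikonal problem along the same geodesic with the same initial data propagated through; the amplitudes agree because $U_A$ and the scalar factor are globally defined along $\gamma$). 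The commutator terms $[\d_t+\Delta_A+V, \chi_j]v_j$ produced by the cutoffs are supported away from $\Gamma$ where $v_j$ is exponentially small in $s$, hence contribute $O(s^{-\infty})$.

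The main obstacle I expect is bookkeeping the interaction of the time variable with the spatial WKB expansion: because the equation is Schr\"odinger rather than elliptic or hyperbolic, the scaling is $\d_t \sim s^2$ (matching the $\Delta$ term), so one must be careful that the ansatz phase $\Psi(r,y) - st$ produces the correct cancellations at orders $s^2$ and $s^1$ simultaneously, and that the factor $\tilde\chi(t)$ — which carries the compatibility condition $f|_{t=0}=\d_t f|_{t=0}=0$ needed to match \eqref{1.1} — can be inserted without disturbing the transport equations (it can, since $\d_t$ acting on $\tilde\chi$ lands at order $s^1$ and is absorbed into the $a_1$ equation, or one simply takes $\tilde\chi$ to multiply through). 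A secondary technical point is verifying uniformity of the Riccati solution $H(r)$ and the $L^2$ normalization across the finitely many patches, but this is routine given Lemma \ref{Fermi} and the non-trapping hypothesis.
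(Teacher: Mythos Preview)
Your overall architecture matches the paper's proof: WKB ansatz $v=e^{is(\Psi-st)}a$ in Fermi coordinates, eikonal equation for $\Psi$ with the Riccati solution $H(r)$, transport equations for the amplitude yielding the $U_A$ factor, then gluing via Lemma~\ref{Fermi}. However, there is one genuine gap and two smaller slips.

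The gap is your treatment of the amplitude off $\Gamma$. You write that one ``extends $a_0$ to a smooth function of $y$ (choosing, say, its Taylor expansion in $y$ freely, or simply making it independent of $y$ to leading order times a cutoff).'' This does not give the estimate \eqref{edit_p3est}. If the transport equation for $a_0$ is solved only at $y=0$, the order-$s$ term in \eqref{WKB} carries an $O(|y|)$ error, and
\[
\bigl\|s\cdot s^{\frac{m-1}{4}}e^{-\frac{1}{4}cs|y|^2}|y|\bigr\|_{L^2}\sim s^{1/2},
\]
not $O(s^{-1})$; adding more terms $a_1,a_2,\dots$ in the $s^{-1}$-expansion cannot repair this, since the defect sits at order $s^1$. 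The paper's proof instead solves \emph{both} the eikonal equation and each transport equation in \eqref{transport} to $N$th order in $y$ along $\Gamma$ (with $N=5$), determining the higher Taylor coefficients $a_{0j}$, $a_{1j},\dots$ by further linear ODEs in $r$. Only then does the residual take the form $s^2|y|^6+s^{-5}$, which yields $O(s^{-1})$ in $L^2$ after the Gaussian integration.

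Two smaller points. First, the $i\partial_t$ contribution lands at order $s^0$, not $s^1$: in \eqref{WKB} the term $e^{is(\Psi-st)}(i\partial_t+\Delta_A+V)a$ has no factor of $s$, so there is no ``$i\partial_t a_0\cdot(\text{something})$'' in the leading transport equation --- $t$ really is a pure parameter there, and $\tilde\chi(t)$ can multiply through as you later say. Second, your gluing justification via ``commutator terms supported away from $\Gamma$ where $v_j$ is exponentially small'' is not right: the partition of unity $\{\rho_j\}$ is in the $r$-variable (subordinate to the intervals $I_j$), so $\partial_r\rho_j$ is supported \emph{on} $\Gamma$. The paper's mechanism --- which you also allude to --- is that by propagating initial data from patch to patch one has $v^{(j)}=v^{(j+1)}$ \emph{exactly} on $U_j\cap U_{j+1}$, so $\sum\tilde\rho_j v^{(j)}$ equals a single $v^{(l)}$ locally and no commutator terms arise at all.
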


\begin{proof}

The first step is the local construction of such an approximate solution. That is, we wish to construct a solution $v$ of the Schr\"odinger equation in $U$, with the form\[v=e^{is(\Psi(r,y)-st)}a(s;t,r,y),\]where $\Psi\in C^\infty(M;\C)$, $a\in C^\infty((0,T)\times M;E)$ are given near $\Gamma$, with $a$ supported in $\{\abs{y}<\delta'/2\}$. For convenience, we shall supress the dependence on $s$ of the amplitude function $a$.\\

In practice, we will determine the functions $\Psi,a$ by solving certain Eikonal and transport equations up to $N$th order on $\Gamma$, much as one would in the analogous construction for the stationary Schr\"odinger equation (see e.g. \cite[Theorem 5.4]{MC1}). As a result of using the phase $e^{is(\Psi-st)}$, we obtain the same Eikonal equation for $\Psi$ as in \cite{MC1}, but derive easier transport equations for the amplitude $a$ (compared to \cite[Theorem 5.4]{MC1}, where $a$ satisfies transport equations of $\overline{\d}$-type).\\

Therefore, let us begin by deriving these Eikonal and transport equations. Recalling the expression (\ref{CLL}) for the connection Laplacian, we first compute the Schr\"odinger operator applied to $v$:\begin{equation}\begin{split}\label{WKB}(i\d_t+\Delta_A+V)v=&e^{is(\Psi-st)}(i\d_t+\Delta_A+V)a+s^2e^{is(\Psi-st)}\Big(1-(d\Psi,d\Psi)_g\Big)a\\&+2ise^{is(\Psi-st)}\Big((d\Psi,\nabla^Aa)_g+\frac{1}{2}(\Delta_g\Psi)a\Big).\end{split}\end{equation}

In light of the above, we seek $\Psi$ satisfying the Eikonal equation\begin{equation}\label{Eikonal}(d\Psi,d\Psi)_g-1=0\textrm{ to $N$th order on }\Gamma,\end{equation}exactly as in the case of the stationary Schr\"odinger equation. On the other hand, the amplitude $a$ should satisfy, up to a small error, the transport equation\[s\Big((d\Psi,\nabla^Aa)_g+\frac{1}{2}(\Delta_g\Psi)a\Big)-\frac{i}{2}(i\d_t+\Delta_A+V)a=0\quad\textrm{to $N$th order on }\Gamma.\]\\

Thus, we begin by seeking a solution $\Psi$ of (\ref{Eikonal}) having the form $\Psi=\sum_{j=0}^{N}\Psi_j$, where\[\Psi_j(r,y)=\sum_{\abs{\alpha}=j}\frac{\Psi_{j,\alpha}(r)}{\alpha!}y^\alpha.\]Let us also write the metric in the form $g^{jk}=\sum_{l=0}^Ng^{jk}_l+r^{jk}_{N+1}$, where\[g^{jk}_l(r,y)=\sum_{\abs{\beta}=l}\frac{g^{jk}_{l,\beta}(r)}{\beta!}y^\beta,\quad r^{jk}_{N+1}=O(\abs{y}^{N+1}).\]

By the properties of the Fermi coordinates, we observe that $g^{jk}_0=\delta^{jk}$ and $g^{jk}_1=0$. Thus, we can immediately choose $\Psi_0(r)=r$ and $\Psi_1(r,y)=0$. Then, for $j,k=1...m$ and $\alpha,\beta=2...m$, we have\begin{equation}\begin{split}\label{G2}g^{jk}\d_j\Psi\d_k\Psi-1&=(1+g^{11}_2+\cdots)(1+\d_r\Psi_2+\cdots)(1+\d_r\Psi_2+\cdots)\\&+2(g^{1\alpha}_2+\cdots)(1+\d_r\Psi_2+\cdots)(\d_{y^\alpha}\Psi_2+\cdots)\\&+(\delta^{\alpha\beta}+g^{\alpha\beta}_2+\cdots)(\d_{y^\alpha}\Psi_2+\d_{y^\alpha}\Psi_3+\cdots)(\d_{y^\beta}\Psi_2+\d_{y^\beta}\Psi_3+\cdots)-1\\&=[2\d_r\Psi_2+\nabla_y\Psi_2\cdot\nabla_y\Psi_2+g^{11}_2]\\&+\sum_{p=3}^N\Big[2\d_r\Psi_p+2\nabla_y\Psi_2\cdot\nabla_y\Psi_p+\sum_{l=0}^pg^{11}_l \sum_{\substack{{j+k=p-l}\\{0\leq j,k<p}}}\d_r\Psi_j\d_r\Psi_k\\&+2\sum_{l=2}^pg^{1\alpha}_l\sum_{\substack{{j+k=p+1-l}\\{2\leq k<p}\\{0\leq j<p}}}\d_r\Psi_j\d_{y^\alpha}\Psi_k+\sum_{l=0}^{p-2}g^{\alpha\beta}_l\sum_{\substack{{j+k=p+2-l}\\{2\leq j,k<p}}}\d_{y^\alpha}\Psi_j\d_{y^\beta}\Psi_k\Big]+O\big(\abs{y}^{N+1}\big).\end{split}\end{equation}
In the last equality, we have chosen to collect the terms into homogeneous polynomials in $y$ (so that the first term is the second degree part of the right-hand side, and the rest are the parts of degree $p=3,\dots,N$). We first choose $\Psi_2$ such that the second-degree term $[2\d_r\Psi_2+\nabla_y\Psi_2\cdot\nabla_y\Psi_2+g^{11}_2]$ vanishes.\\

To this end, we choose $\Psi_2(r,y)=\frac{1}{2}H(r)y\cdot y$, where $H$ is a smooth, symmetric, complex matrix solving the matrix Riccati equation\begin{equation}\label{Riccati}H'(r)+H^2(r)=F(r),\end{equation}and $F(r)$ is the symmetric matrix such that $g^{11}_2(r,y)=-F(r)y\cdot y$. If we impose some initial condition $H(r_0)=H_0$ on this equation, where $H_0$ is  chosen to be a complex symmetric matrix with $\Imag(H_0)$ positive definite, then \cite[Lemma 2.56]{C5} implies that the matrix Riccati equation above has a unique smooth symmetric solution $H(r)$, for which $\Imag(H(r))$ is positive definite.\\

We now choose $\Psi_3$ so that the term corresponding to $p=3$ in the right-hand side of (\ref{G2}) vanishes. We obtain the equation\[2\d_r\Psi_3+2\nabla_y\Psi_2\cdot\nabla_y\Psi_3=F(r,y),\]where $F$ is a third-order polynomial in $y$ which only depends on $\Psi_2$ and $g$. This gives us a linear system of first-order ODEs for the Taylor coefficients $\Psi_{3,\alpha}(r)$, which can be solved uniquely if we prescribe some initial conditions at $r_0$. We may, then, repeat this argument in order to obtain $\Psi_4,\dots,\Psi_N$ by solving ODEs on $\Gamma$, given inital conditions at $r_0$.\\

Thus, we have $\Psi(r,y)=r+\frac{1}{2}H(r)y\cdot y+\tilde{\Psi}$, where $\tilde{\Psi}=O(\abs{y}^3)$. We now turn to finding the amplitude $a$ such that, up to a small error, we have\[s\Big((d\Psi,\nabla^Aa)_g+\frac{1}{2}(\Delta_g\Psi)a\Big)-\frac{i}{2}(i\d_t+\Delta_A+V)a=0\quad\textrm{to $N$th order on }\Gamma.\]

We choose $a$ of the form\[a=s^{\frac{m-1}{4}}(a_0+s^{-1}a_{1}+\cdots+s^{-N}a_{N})\chi(y/\delta'),\]where $\chi$ is a smooth function such that $\chi=1$ for $\abs{y}\leq\nicefrac{1}{4}$ and $\chi=0$ for $\abs{y}\geq\nicefrac{1}{2}$. Letting $\eta=\Delta_g\Psi$, it is enough to find $a_j$ such that\begin{equation}\begin{split}\label{transport}(d\Psi,\nabla^Aa_0)_g+\frac{1}{2}\eta a_0&=0\quad\textrm{to $N$th order on $\Gamma$}\\(d\Psi,\nabla^Aa_1)_g+\frac{1}{2}\eta a_1-\frac{i}{2}(i\d_t+\Delta_A+V)a_0&=0\quad\textrm{to $N$th order on $\Gamma$}\\\vdots\\(d\Psi,\nabla^Aa_N)_g+\frac{1}{2}\eta a_N-\frac{i}{2}(i\d_t+\Delta_A+V)a_{N-1}&=0\quad\textrm{to $N$th order on $\Gamma$}.\end{split}\end{equation}

We can write $\eta=\sum_{l=0}^N\eta_l+r_{N+1}$ and $a_0=a_{00}+\cdots+a_{0N}$, where each $\eta_l$, $a_{0l}$ is a homogenenous polynomial of order $j$ in $y$, and the remainder $r_{N+1}$ is $O(\abs{y}^{N+1})$. Thus, writing $A=A(\gamma')dr+A(\d_{y^\alpha})dy^\alpha$, we can rewrite the transport equation for $a_0$ in the system (\ref{transport}) above as\begin{equation}\begin{split}\label{edit_transport}\Big(1+g^{11}_2+\cdots\Big)\Big(1+\d_r\Psi_2+\cdots\Big)\Big(\d_r+A(\gamma')\Big)\Big(a_{00}+a_{01}+\cdots\Big)\\+\Big(g^{1\alpha}_2+\cdots\Big)\Big(1+\d_r\Psi_2+\cdots\Big)\Big(\d_{y^\alpha}+A(\d_{y^\alpha})\Big)\Big(a_{00}+a_{01}+\cdots\Big)\\+\Big(g^{\alpha1}_2+\cdots\Big)\Big(1+\d_{y^\alpha}\Psi_2+\cdots\Big)\Big(\d_r+A(\gamma')\Big)\Big(a_{00}+a_{01}+\cdots\Big)\\+\Big(\delta^{\alpha\beta}+g^{\alpha\beta}_2+\cdots\Big)\Big(\d_{y^\alpha}\Psi_2+\d_{y^\alpha}\Psi_3+\cdots\Big)\Big(\d_{y^\beta}+A(\d_{y^\beta})\Big)\Big(a_{00}+a_{01}+\cdots\Big)\\+\frac{1}{2}(\eta_0+\eta_1+\cdots)(a_{00}+a_{01}+\cdots)\\=0.\end{split}\end{equation}

We can then write $A(\theta)=\sum_{l=0}^NA_l(\theta)+R_{N+1}(\theta)$, where the entries of each $A_l(\theta)$ are homogeneous polynomials in $y$ of order $l$, and the remainder $R_{N+1}(\theta)$ is $O(\abs{y}^{N+1})$. As a result, we deduce that the left-hand side of (\ref{edit_transport}) becomes

\[\begin{split}\d_ra_{00}+A_0(\gamma')a_{00}+\frac{1}{2}\eta_0a_{00}\\+\d_ra_{01}+A_0(\gamma')a_{01}+A_1(\gamma')a_{00}+\nabla_y\Psi_2\cdot\nabla_ya_{01}+\nabla_y\cdot A_0(\gamma')a_{00}+\frac{1}{2}\eta_0a_{01}+\frac{1}{2}\eta_1a_{00}\\+\cdots\end{split}\]

We wish to find $a_{00}$ such that the first line of the above expression vanishes. To this end, we note that $\eta_0(r)=\Delta_g\Psi(r,0)=g^{\alpha\beta}\d_{y^\alpha}[H_{\alpha\beta}y^\alpha]=\tr H(r)$. We therefore choose $a_{00}$ such that\[\d_ra_{00}+A_0\big(\gamma'(r)\big)a_{00}+\frac{1}{2}\big(\tr H(r)\big)a_{00}=0.\]Since $A_0(t,r)=A(t,r,0)$, this equation has the solution\[a_{00}(t,r)=c_0\tilde{\chi}(t)e^{-\frac{1}{2}\int_{r_0}^r\tr H(\tilde{r})d\tilde{r}}\cdot U_Aw,\]where $\tilde{\chi}\in C_0^\infty((\tau,T-\tau))$ satisfies $\tilde{\chi}=1$ on $[2\tau, T-2\tau]$, $0\leq\tilde{\chi}\leq1$ and $\norm{\tilde{\chi}}_{W^{k,\infty}(\R)}\leq C_k\tau^{-k}$ with $C_k$ independent of $\tau$, and where $w$ is some arbitrary initial vector and $c_0$ is some initial constant which determines the value of $a_{00}(t,r_0)$. Since it will simplify later calculations, we choose the value\begin{equation}\label{c_0}c_0=\frac{\sqrt[4]{\det\Imag(H(r_0))}}{\sqrt{\Big(\int_{\R^{m-1}}e^{-\abs{y}^2}dy\Big)}}=\frac{\sqrt[4]{\det\Imag(H(r_0))}}{\pi^{\frac{m-1}{4}}}.\end{equation}We note that the constructed $a_{00}$ is smooth in time, since the connection form $A$ is smooth in time and the transport equation for $a_{00}$ is well-posed. We can then obtain the rest of $a_{01},\dots,a_{0N}$ by solving linear first-order ODEs. The sections $a_1,\dots,a_N$ may then be determined in much the same manner as $a_0$, so that the transport equations in (\ref{transport}) are satisfied to $N$th degree on $\Gamma$, and the smooth time-dependence of the amplitude $a$ follows from the smooth time-dependence of the connection form and the well-posedness of the transport equations that determine $a$.\\

Thus, we have constructed a function $v=e^{is(\Psi-st)}a$ in $U$ such that:\begin{align*}&\Psi(r,y)=r+\frac{1}{2}H(r)y\cdot y+\tilde{\Psi},\ \textrm{where }\tilde{\Psi}=O(\abs{y}^3),\\&a(t,r,y)=s^{\frac{m-1}{4}}(a_0+s^{-1}a_1+\cdots+s^{-N}a_N)\chi(y/\delta'),\\&a_0(t,r,0)=c_0\tilde{\chi}(t)e^{-\frac{1}{2}\int_{r_0}^r\tr H(\tilde{r})d\tilde{r}}\ U_{A}w.\end{align*}We now turn to establishing the bounds (\ref{edit_p3est}) for the function $v$ in $U$. Henceforth, we shall choose $N=5$. To begin with, note that\[\abs{e^{is(\Psi-st)}}=e^{-s\Imag\Psi}=e^{-\frac{1}{2}s\Imag(H(r))y\cdot y}e^{-sO(\abs{y}^3)}.\]

Note also that $\Imag(H(r))y\cdot y\geq c\abs{y}^2$ for $(r,y)\in U$, where the constant $c>0$ depends on $H_0$ and the value of $\delta$ appearing in the definition of $U$. Thus, it follows for $(r,y)\in U$ that, provided $y$ is sufficiently small, we have\[\abs{e^{is(\Psi(r,y)-st)}}\leq Ce^{-\frac{1}{4}cs\abs{y}^2}.\]From this fact, together with the definition of $\chi(y/\delta')$ and $\tilde{\chi}(t)$, we can deduce that for $r$ in a compact interval, possibly after decreasing $\delta'$, we have\[\abs{v(t,r,y)}\leq Cs^{\frac{m-1}{4}}e^{-\frac{1}{4}cs\abs{y}^2}\chi(y/\delta'),\quad s\gg1.\]

As a result, we have that for $s\gg1$,\begin{equation}\label{bnd1}\norm{v}_{L^2((0,T)\times U;E)}\leq C\norm{s^{\frac{m-1}{4}}e^{-\frac{1}{4}cs\abs{y}^2}}_{L^2((0,T)\times U;E)}=O(1).\end{equation}

Further, since $\Psi,a$ satisfy (\ref{Eikonal}) and (\ref{transport}) to $5$th order on $\Gamma$, we deduce from (\ref{WKB}) that\[\abs{(i\d_t+\Delta_A+V)v}\leq Cs^{\frac{m-1}{4}}e^{-\frac{1}{4}cs\abs{y}^2}\chi(y/\delta')\Big(s^2\abs{y}^6+s^{-5}\Big).\]It follows, then, that\begin{equation}\label{bnd3}\norm{(i\d_t+\Delta_A+V)v}_{L^2((0,T)\times U;E)}\lesssim\norm{s^\frac{m-1}{4}e^{-\frac{1}{4}cs\abs{y}^2}\big(s^2\abs{y}^6+s^{-5}\big)}_{L^2((0,T)\times U;E)}=O\big(s^{-1}\big),\end{equation}and by the same method we may further deduce that\begin{equation}\label{bnd4}\norm{(i\d_t+\Delta_A+V)v}_{H^{1,0}((0,T)\times U;E)}\lesssim\norm{s^\frac{m-1}{4}e^{-\frac{1}{4}cs\abs{y}^2}\big(s^2\abs{y}^6+s^{-3}\big)}_{L^2((0,T)\times U;E)}=O\big(s\big).\end{equation}

Thus, we have established the result of Proposition \ref{edit_p3} for $v$ in $U$. It remains to show that we can construct an approximate Gaussian beam solution $v$ in $M$ by gluing together the approximate solutions which we have constructed in $U$.\\

To this end, recall the definition of the sets $U_j$ from Lemma \ref{Fermi}. Fix $\delta'$ and choose an open cover $U_0,\dots,U_{K}$ of $\gamma([a_0,b_0])$, with each $U_j$ corresponding to an interval $I_j$, as in Lemma \ref{Fermi}. We first find a function $v^{(0)}=e^{is(\Psi^{(0)}-st)}a^{(0)}$ in $U_0$ following the method above, with some fixed initial conditions at $r_0$ for the ODEs that determine $\Psi^{(0)}$ and $a^{(0)}$. We continue by choosing some $\tilde{r}_1\in I_0\cap I_1$ so that $\gamma(\tilde{r}_1)\in U_0\cap U_1$, and construct $v^{(1)}=e^{is(\Psi^{(1)}-st)}a^{(1)}$ in $U_1$ again by the above method, choosing our initial conditions for $\Psi^{(1)}$, $a^{(1)}$ at $\tilde{r}_1$ to be, respectively, the values of $\Psi^{(0)}$ and $a^{(0)}$ at $\tilde{r}_1$. In this manner, we can proceed to determine $v^{(K)}$. We choose a partition of unity $\{\rho_j(r)\}$ for $[a_0,b_0]$ corresponding to the family of intervals $\{I_j\}$, and let $\tilde{\rho}_j(r,y)=\rho_j(r)$ in $U_j$. We can then define\[v=\sum_{j=0}^K\tilde{\rho}_jv^{(j)}.\]

Since the ODEs for the phases and amplitudes have the same initial value in $U_j$ as in $U_{j+1}$, we can deduce that $v^{(j)}=v^{(j+1)}$ in $U_j\cap U_{j+1}$. Therefore, we conclude that the $L^2$-scale bounds (\ref{bnd1}) for $v$ and (\ref{bnd3})-(\ref{bnd4}) for $(i\d_t+\Delta_A+V)v$ follow with $U=M$ from the corresponding bounds in $U_l$ for each $v^{(l)}$.
\end{proof}

Before we proceed, we note also the following partition, which shall be useful later. Suppose that $p_1,\dots,p_{K'}$ are the distinct points where the geodesic self-intersects, $0<r_1<\cdots<r_K<L$ are the times where the geodesic self-intersects, and $V_1,\dots,V_{K'}$ are balls centered at $p_1,\dots,p_{K'}$ respectively. Note that in each $V_j$, the function $v$ is given by\[v\vert_{V_j}=\sum_{\gamma(t_l)=p_j}v^{(l)}.\] By considering the individual sets $U_l\setminus(\cup_{j=1}^{K'}V_j)$, we can then choose a finite cover $\{W_1,\cdots,W_{K''}\}$ of the remaining points of $(\cup_{l=1}^KU_l)\setminus(\cup_{j=1}^{K'}V_j)$, where each $W_k$ is a subset of $U_{l_k}$ for some $l_k$. In particular, we can choose $W_k$ which remain away from $\gamma(r_{l_k})$, so that for small enough $\delta'$ it follows that\[v\vert_{W_k}=v^{(l_k)}.\] Together, the $V_j$ and $W_k$ cover form a cover\begin{equation}\label{cover}\textrm{supp}(v)\cap M\subset \Big(\cup_{j=1}^{K'} V_j\Big)\cup\Big(\cup_{k=1}^{K''}W_k\Big).\end{equation}

\section{Determination of the Scattering Data}

Let us begin by establishing the following useful Lemma, which will aid us in the proof of Theorem \ref{t1}.

\begin{lemma}\label{edit_gaugefix}
Given any connection form $A$ in $M$, there is a gauge-equivalent connection form $\tilde{A}$ with the additional property that $\tilde{A}(\nu)\vert_{\d M}=0$.
\end{lemma}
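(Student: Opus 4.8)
The plan is to produce the gauge transformation $G$ by integrating the normal component of $A$ outward from the boundary along a collar neighbourhood, then extending by the identity. First I would fix a boundary collar: since $\partial M$ is a compact smooth hypersurface, there is a neighbourhood $\mathcal{N}$ of $\partial M$ diffeomorphic to $[0,\epsilon)\times\partial M$ via the normal exponential map, so that $x\in\mathcal{N}$ has coordinates $(\tau,x')$ with $\tau(x)=\mathrm{dist}(x,\partial M)$, the curves $\tau\mapsto(\tau,x')$ are unit-speed geodesics hitting $\partial M$ orthogonally, and $\nu=-\partial_\tau$ on $\partial M$ (with the sign convention adjusted to the outward normal). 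In these coordinates write $A=A_\tau\,d\tau+A'$, where $A_\tau(\tau,x',t)=A(\partial_\tau)$ is a smooth skew-Hermitian matrix-valued function. (The time variable rides along passively throughout; $G$ will depend smoothly on $t$ because $A$ does.)

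Next I would define, for $(\tau,x')\in\mathcal{N}$, the matrix $G_0(\tau,x',t)$ as the fundamental solution of the ODE $\partial_\tau G_0 = -A_\tau G_0$ with $G_0|_{\tau=0}=\mathrm{Id}$; since $A_\tau$ is skew-Hermitian this is exactly the parallel-transport equation \eqref{PT2} along the normal geodesics, so $G_0$ takes values in $U(n)$, is smooth, and satisfies $G_0|_{\partial M}=\mathrm{Id}$. A direct computation of $\widetilde{A}:=G_0^{-1}AG_0+G_0^{-1}dG_0$ shows its $d\tau$-component is $G_0^{-1}A_\tau G_0+G_0^{-1}\partial_\tau G_0 = G_0^{-1}A_\tau G_0 - G_0^{-1}A_\tau G_0=0$ on all of $\mathcal{N}$, hence in particular $\widetilde{A}(\nu)|_{\partial M}=0$. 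The only remaining issue is that $G_0$ is defined only on the collar and need not equal the identity at the inner edge $\tau=\epsilon$, so it is not yet an admissible global gauge; this is the step I expect to be the main (though mild) obstacle.

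To fix this, I would cut off: choose a smooth $\psi:[0,\epsilon)\to[0,1]$ with $\psi\equiv1$ near $\tau=0$ and $\psi\equiv0$ for $\tau\geq\epsilon/2$, and—rather than multiply $G_0$ by $\psi$, which would destroy unitarity and the vanishing of the $d\tau$-component—pass through the Lie-algebra level. Since $A_\tau$ is skew-Hermitian and $G_0(\tau,x',t)\in U(n)$ depends smoothly on $(\tau,x',t)$ with $G_0|_{\tau=0}=\mathrm{Id}$, on a possibly smaller collar one may write $G_0=\exp(B)$ for a smooth skew-Hermitian $B(\tau,x',t)$ with $B|_{\tau=0}=0$ (using that $\exp$ is a diffeomorphism from a neighbourhood of $0$ in $\mathfrak{u}(n)$ onto a neighbourhood of $\mathrm{Id}$ in $U(n)$, after shrinking $\epsilon$ so $G_0$ stays in that neighbourhood); then set $G:=\exp(\psi(\tau)B)$ on $\mathcal{N}$ and $G:=\mathrm{Id}$ on $M\setminus\mathcal{N}$. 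This $G$ is smooth, $U(n)$-valued, equals $\mathrm{Id}$ near $\partial M$ and outside the collar so it glues to a global admissible gauge transformation with $G|_{\partial M}=\mathrm{Id}$, and near $\partial M$ it coincides with $G_0$ so the resulting $\widetilde{A}=G^{-1}AG+G^{-1}dG$ still satisfies $\widetilde{A}(\nu)|_{\partial M}=0$.

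Alternatively, and more cleanly if one does not want to shrink the collar, one can keep $G:=G_0$ on $\{\tau\le\epsilon/4\}$, interpolate through $\mathfrak{u}(n)$ as above only on $\{\epsilon/4\le\tau\le\epsilon/2\}$ where no constraint on $\widetilde A(\nu)$ is needed, and glue to $\mathrm{Id}$ outside; either way the key point is that the constraint $\widetilde{A}(\nu)|_{\partial M}=0$ is a purely boundary condition, insensitive to how $G$ is modified in the interior, so the cutoff is harmless. This completes the construction.
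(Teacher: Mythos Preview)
Your proof is correct but takes a more elaborate route than the paper. The paper's argument is direct: in boundary normal coordinates $(y,r)$ it simply writes down the explicit gauge $G(r,\cdot)=\exp\bigl(-r\mu(r)\,A(\nu)(r,\cdot)\bigr)$, where $\mu\in C^\infty([0,\varepsilon];\R)$ is a cutoff equal to $1$ near $r=0$ and $0$ near $r=\varepsilon$. This $G$ is unitary because $A(\nu)$ is skew-Hermitian, it extends by $\Id$ outside the collar because $\mu$ vanishes there, and since only the boundary values $G|_{\d M}=\Id$ and $\d_r G|_{\d M}=-A(\nu)|_{\d M}$ are needed to verify $\tilde A(\nu)|_{\d M}=0$, no further work is required. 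Your approach instead solves the full parallel-transport ODE along the normal geodesics, which kills the normal component of $\tilde A$ on the \emph{entire} collar (stronger than needed), but then you must do real work to globalise---passing through the matrix logarithm, shrinking the collar so that $G_0$ stays near $\Id$, and cutting off at the Lie-algebra level to preserve unitarity. Both arguments are valid; the paper's is shorter because it builds the cutoff into the exponent from the outset and only targets the boundary condition, while yours is more conceptual and in fact puts $A$ into temporal gauge on a whole neighbourhood of $\d M$, at the cost of the extra extension step.
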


\begin{proof}
Let us use boundary normal coordinates in a neighbourhood of $\d M$ (see e.g. \cite[Section 2.1]{C5}) given by $(y,r)\in\d M\times[0,\varepsilon]$, with $\varepsilon>0$. Then for $u\in C^\infty(M)$ it holds that $\d_ru\vert_{\d M}=-\d_\nu u\vert_{\d M}$. Further, in these coordinates, we can decompose the connection form as $A=A(\nu)dr+A(\d_{y^\alpha})dy^\alpha$. We fix some $\mu\in C^\infty([0,\varepsilon];\R)$ such that $\mu(r)=1$ for $r$ near $0$ and $\mu(r)=0$ for $r$ near $\varepsilon$. We may then define a local gauge $G\in C^\infty((0,T)\times\d M\times[0,\varepsilon];\C^{n\times n})$ via \[G(r,\cdot)=e^{-r\mu(r)A(\nu)\, (r,\cdot)},\] where the dependence on $t$ and $y$ is left implicit. Note that, since $A$ is skew-Hermitian, it follows that $G$ is unitary, and since $G(r)=\Id$ for $r$ near $\varepsilon$, we can extend $G$ to an element of $C^\infty((0,T)\times M;U(n))$. Moreover, observe that\[G\vert_{\d M}=\Id,\quad\d_rG\vert_{\d M}=-A(\nu)\vert_{\d M}.\] Let $\tilde{A}$ denote the gauge transform of $A$ by $G$, that is\[\tilde{A}=G^{-1}AG+G^{-1}dG\]and observe that\[\tilde{A}(\nu)\vert_{\d M}=A(\nu)\vert_{\d M}-A(\nu)\vert_{\d M}=0.\]\end{proof}

Since the data $\Lambda_{A_j,V_j}$ and $C_{A_j}$ are gauge invariant, Lemma \ref{edit_gaugefix} implies that we may assume without any loss of generality that $A_1(\nu)\vert_{\d M}=A_2(\nu)\vert_{\d M}=0$ in the proofs of Theorems \ref{t1} and \ref{t2}. With this in mind, we proceed to the proof of Theorem \ref{t1}.\\

For $j=1,2$, we construct Gaussian beam solutions $u_j$ of the Schr\"odinger equations\begin{equation}\begin{split}\big(i\d_t+\Delta_{A_j}+V_j\big)u_j(t,x)&=0\textrm{ in }(0,T)\times M,\\u_1(0,\cdot)=u_2(T,\cdot)&=0\textrm{ in }M.\end{split}\end{equation}To this end, we fix some geodesic $\gamma_{x,\theta}$ in $\widehat{M}$ for $x\in\d M$, and choose a system of Fermi coordinates along $\gamma_{x,\theta}$ as in Lemma \ref{Fermi}. Using the work of the previous section, we construct approximate solutions $v_j$ in $M$ having the form $e^{is(\Psi-st)}a^{(A_j)}$ away from the self-intersections of $\gamma_{x,\theta}$. We can turn these $v_j$ into exact solutions $u_j=v_j+R_j$ by solving\[\begin{split}\big(i\d_t+\Delta_{A_j}+V_j\big)R_j&=-(i\d t+\Delta_{A_j}+V_j)v_j\textrm{ in }(0,T)\times M,\\R_j&=0\textrm{ on }(0,T)\times\d M,\\R_1(0,\cdot)&=R_2(T,\cdot)=0\textrm{ in }M.\end{split}\]

Note that for $s\gg1$, (\ref{bnd3}) and the energy estimate (\ref{s2.3}) yield\begin{equation}\label{d1est}\norm{R_j}_{L^2((0,T)\times M;E)}\leq C\norm{(i\d_t+\Delta_{A_j}+V_j)v_j}_{L^2((0,T)\times M;E)}=O\big(s^{-1}\big),\end{equation}whereas (\ref{bnd4}) and the energy estimate (\ref{NRGest}) yield\[\norm{R_j}_{H^{0,2}((0,T)\times M;E)}\leq C\norm{(i\d_t+\Delta_{A_j}+V_j)v_j}_{H^{1,0}((0,T)\times M;E)}=O(s).\]

Interpolating between (\ref{d1est}) and the above, we conclude that for $s\gg1$ we have\begin{equation}\label{Rjbnd}\norm{R_j}_{H^{0,1}((0,T)\times M;E)}+s\norm{R_j}_{L^2((0,T)\times M;E)}=O(1).\end{equation}

We then set $\phi_j=u_j$ on $(0,T)\times\d M$ and consider $\omega\in H^{1,2}((0,T)\times M;E)$ the solution of
\begin{equation}\begin{split}\label{3.1}(i\d_t+\Delta_{A_2(t)}+V_2(t,x))\omega(t,x)&=0\textrm{ in }(0,T)\times M,\\\omega(t,x)&=\phi_1\textrm{ on }(0,T)\times\d M,\\\omega(0,\cdot)&=0\textrm{ in }M.\end{split}\end{equation}

We observe that the difference $\omega-u_1$ solves the following Schr\"odinger equation:

\begin{equation}\begin{split}\label{3.2}\big(i\d_t+\Delta_{A_2}+V_2\big)(\omega-u_1)&=2(A_1-A_2,du_1)_g+Qu_1\textrm{ in }(0,T)\times M,\\\omega-u_1&=0\textrm{ on }(0,T)\times\d M,\\\omega(0,x)-u_1(0,x)&=0\textrm{ in }M,\end{split}\end{equation}
where $Qu_1=(V_1-V_2)u_1+(A_1,A_1u_1)_g-(A_2,A_2u_1)_g-(d^\ast A_1)u_1+(d^\ast A_2)u_1$.\\

Taking the Hermitian inner product of the above equation with $u_2$, we deduce that \begin{equation}\label{3.3}\int_0^T\int_M\pair{2(A_1-A_2,du_1)_g+Qu_1,u_2}_EdV_gdt=\int_0^T\int_{\d M}\pair{\d_\nu(\omega-u_1),u_2}_Ed\sigma_gdt.\end{equation}

As a result of Lemma \ref{edit_gaugefix}, we may assume without loss of generality that $A_1(\nu)\vert_{\d M}=A_2(\nu)\vert_{\d M}=0$. Therefore, the right-hand side of the above is bounded by \begin{equation}\begin{split}\label{3.4}\abs{\int_0^T\int_{\d M}\pair{\d_\nu(\omega-u_1),u_2}_Ed\sigma_gdt}&\leq C\norm{(\Lambda_{A_1,V_1}-\Lambda_{A_2,V_2})\phi_1}_{L^2((0,T)\times\d M;E)}\norm{\phi_2}_{L^2((0,T)\times\d M;E)}\\&\leq C\norm{\Lambda_{A_1,V_1}-\Lambda_{A_2,V_2}}\norm{\phi_1}_{H^{\frac{9}{4},\frac{3}{2}}((0,T)\times\d M;E)}\norm{\phi_2}_{L^2((0,T)\times\d M;E)},\end{split}\end{equation}which vanishes when $\Lambda_{A_1,V_1}=\Lambda_{A_2,V_2}$. On the other hand, since $du_1=is(d\Psi)u_1+e^{is(\Psi-st)}da^{(A_1)}+dR_1$, the left-hand side of (\ref{3.3}) can be written as

\[\begin{split}\int_0^T\int_M\pair{2(A_1-A_2,du_1)_g+Qu_1,u_2}_EdV_gdt=is\int_{(0,T)\times M}\pair{2\big((A_1-A_2),(d\Psi)u_1\big)_g,u_2}_EdV_gdt\\+\int_{(0,T)\times M}\pair{2\big((A_1-A_2),e^{is(\Psi-st)}da^{(A_1)}+dR_1\big)_g+Qu_1,u_2}_EdV_gdt.\end{split}\]

We can divide the above by $s$ and use the bounds (\ref{Rjbnd}) and (\ref{bnd1}) to deduce that\begin{equation}\begin{split}\label{3.5}&\abs{\int_{(0,T)\times M}\pair{\big((A_1-A_2),(d\Psi)u_1\big)_g,u_2}_EdV_gdt}\\\leq &s^{-1}\abs{\int_{(0,T)\times M}\pair{\big((A_1-A_2),du_1\big)_g+Qu_1,u_2}_EdV_gdt}+O\big(s^{-1}\big).\end{split}\end{equation}

By combining (\ref{3.3}), (\ref{3.4}), and (\ref{3.5}), we conclude that when $\Lambda_{A_1,V_1}=\Lambda_{A_2,V_2}$ it follows that\[\abs{\int_{(0,T)\times M}\pair{\Big((A_1-A_2),(d\Psi)u_1\Big)_g\,,u_2}_EdV_gdt}\leq O(s^{-1}).\]Thus, we can let $s\rightarrow\infty$ in the right-hand side of the above to conclude that \begin{equation}\label{3.6}\lim_{s\rightarrow\infty}\int_{(0,T)\times M}\pair{\Big((A_1-A_2),(d\Psi)u_1\Big)_g\,,u_2}_EdV_gdt=0.\end{equation}

We now make use of the following Lemma:

\begin{lemma}\label{SPC}
\[\begin{split}&\lim_{s\rightarrow\infty}\int_{(0,T)\times M}\pair{\Big((A_1-A_2),(d\Psi)u_1\Big)_g\,,u_2}_EdV_gdt\\=\int_0^T&\tilde{\chi}^2\int_{0}^{\rho_+(x,\theta)}\pair{\Big(A_1\big(\gamma'_{x,\theta}(r)\big)-A_2\big(\gamma'_{x,\theta}(r)\big)\Big)U_{A_1}w_1,U_{A_2}w_2}_Edrdt.\end{split}\]
\end{lemma}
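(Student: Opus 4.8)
The plan is to evaluate the limit in \eqref{3.6} by exploiting the Gaussian concentration of $u_1$ and $u_2$ along the geodesic $\gamma_{x,\theta}$, reducing the space integral over $M$ to a line integral over $\gamma_{x,\theta}([0,\rho_+])$ by a stationary-phase / Laplace-type argument in the transversal variables. First I would substitute $u_j = v_j + R_j$ and use the remainder bounds \eqref{Rjbnd} together with \eqref{bnd1} to discard all terms involving $R_1$ or $R_2$: since $\|R_j\|_{L^2} = O(s^{-1})$ while $\|v_j\|_{L^2}=O(1)$ and $d\Psi$ is bounded, the cross terms $\langle ((A_1-A_2),(d\Psi)v_1)_g, R_2\rangle$, $\langle ((A_1-A_2),(d\Psi)R_1)_g, v_2\rangle$ and the $R_1R_2$ term all vanish in the limit. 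This leaves
\[
\lim_{s\to\infty}\int_{(0,T)\times M}\pair{\big((A_1-A_2),(d\Psi)v_1\big)_g,v_2}_E\,dV_g\,dt,
\]
where $v_1 = e^{is(\Psi-st)}a^{(A_1)}$ and $v_2 = e^{is(\overline{\Psi}-st)}\overline{a^{(A_2)}}$ — note that $v_2$ must be the solution vanishing at $t=T$, so its phase is the complex conjugate of $\Psi$ (equivalently one runs the Eikonal/Riccati construction backward), which is exactly what makes the combined exponent $e^{is(\Psi-\overline{\Psi})} = e^{-2s\Imag\Psi}$ a genuine Gaussian weight rather than an oscillatory factor.

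Next I would work inside the cover \eqref{cover}. Away from self-intersections, on each $W_k\subset U_{l_k}$ the integrand is a single product $\bar\rho_{l_k}^2\langle((A_1-A_2),(d\Psi)a^{(A_1)})_g, \overline{a^{(A_2)}}\rangle e^{-2s\Imag\Psi}$; near a self-intersection point $p_j$, $v_\ell\vert_{V_j} = \sum_{\gamma(t_\ell)=p_j} v^{(\ell)}$ is a sum of such Gaussian beams, but the different branches concentrate on transversal to each other (their phases have different real parts of the gradient along $\gamma$, hence the pairwise cross terms carry a non-degenerate oscillation $e^{is(\Psi^{(\ell)}-\overline{\Psi^{(\ell')}})}$ with $\ell\ne\ell'$ and vanish by non-stationary phase), so only the diagonal terms survive. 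On each piece I change to the Fermi coordinates $(r,y)$, recall $\Psi(r,y) = r + \tfrac12 H(r)y\cdot y + O(|y|^3)$ so that $2\Imag\Psi = \Imag(H(r))y\cdot y + O(|y|^3)$ with $\Imag H(r)$ positive definite, rescale $y = s^{-1/2}z$, and apply Laplace's method: with the normalization $a = s^{(m-1)/4}(a_0 + O(s^{-1}))\chi(y/\delta')$ the factor $s^{(m-1)/2}$ from $|a|^2$ cancels the Jacobian $s^{-(m-1)/2}$ from $dy = s^{-(m-1)/2}dz$, and the Gaussian integral over $z$ produces $\int_{\R^{m-1}} e^{-\Imag(H(r))z\cdot z}\,dz = \pi^{(m-1)/2}(\det\Imag H(r))^{-1/2}$. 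The $O(|y|^3)$ correction in the exponent and the higher amplitude terms contribute $O(s^{-1/2})$ and drop out.

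After the Laplace reduction, the surviving integral is over $(t,r) \in (0,T)\times[0,\rho_+(x,\theta)]$ (using $|d\Psi|_g \to 1$ along $\Gamma$ and $(d\Psi,\,\cdot\,)_g$ acting on the $dr$-component picks out $A_j(\gamma'_{x,\theta}(r))$, while the transversal components of $A_1-A_2$ pair against $O(|y|)$ factors and vanish to leading order after rescaling), and the amplitude on $\Gamma$ is $a_0^{(A_j)}(t,r,0) = c_0\tilde\chi(t)e^{-\frac12\int_{r_0}^r \tr H\,d\tilde r}\,U_{A_j}w_j$. The scalar prefactors combine as
\[
|c_0|^2\,e^{-\int_{r_0}^r \tr(\Real H)\,d\tilde r}\cdot \pi^{(m-1)/2}(\det\Imag H(r))^{-1/2},
\]
and the choice \eqref{c_0} of $c_0$ together with the identity $\frac{d}{dr}\log\det\Imag H(r) = 2\,\tr(\Real H(r))$ (which follows from the Riccati equation $H' = F - H^2$ by taking imaginary parts: $(\Imag H)' = -\Real H\cdot\Imag H - \Imag H\cdot\Real H$, hence $\tr((\Imag H)^{-1}(\Imag H)') = -2\tr\Real H$, so actually $\frac{d}{dr}\log\det\Imag H = -2\tr\Real H$) is precisely engineered so that this entire prefactor equals $1$; thus only $\tilde\chi(t)^2$ and $\pair{(A_1-A_2)(\gamma'_{x,\theta}(r))U_{A_1}w_1, U_{A_2}w_2}_E$ remain, and integrating over $t$ and $r$ gives the claimed formula. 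The main obstacle is the bookkeeping at the self-intersections — justifying that the off-diagonal cross terms between distinct branches $v^{(\ell)}, v^{(\ell')}$ meeting at $p_j$ are genuinely non-stationary (so that the stationary-phase contribution is purely diagonal) — and, secondarily, verifying the exact cancellation of the amplitude-normalization prefactor, which hinges on getting the sign in the logarithmic derivative of $\det\Imag H$ and the definition \eqref{c_0} to match.
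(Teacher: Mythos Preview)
Your approach is essentially the paper's: localize via the cover \eqref{cover}, on each $W_k$ pass to Fermi coordinates and apply Laplace's method after the rescaling $y\mapsto s^{-1/2}y$, use the Riccati identity $\det\Imag H(r)=\det\Imag H(r_0)\,e^{-2\int_{r_0}^r\tr\Real H}$ together with the normalization \eqref{c_0} to collapse the scalar prefactor to $1$, and on each $V_j$ discard the off-diagonal cross terms by non-stationary phase (the paper cites \cite[Proposition~3.1]{C4} for this last step).

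One correction, though: in the paper both $v_1$ and $v_2$ carry the \emph{same} phase $e^{is(\Psi-st)}$; the terminal condition $u_2(T,\cdot)=0$ is imposed only on the remainder $R_2$, not on the ansatz $v_2$. The conjugation that produces the Gaussian weight $e^{is(\Psi-\overline\Psi)}=e^{-2s\Imag\Psi}$ comes from the Hermitian pairing $\pair{\,\cdot\,,u_2}_E$, not from building $v_2$ with phase $\overline\Psi$. Taken literally, your $v_2=e^{is(\overline\Psi-st)}\overline{a^{(A_2)}}$ would blow up off the geodesic (since $\Imag\overline\Psi<0$) and would in any case give combined exponent $e^{is\Psi}\,\overline{e^{is\overline\Psi}}=1$, with no concentration at all. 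Once you fix this, the rest of your outline matches the paper's proof.
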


Before proving the above result, we first conclude the proof of Theorem \ref{t1}. Since $\tau\in(0,T/4)$ in the definition of $\tilde{\chi}$ is arbitrary, we deduce that\begin{equation}\label{3.8}\int_{0}^{\rho_+(y,\theta)}\pair{\Big(A_1\big(\gamma'_{x,\theta}(r)\big)-A_2\big(\gamma'_{x,\theta}(r)\big)\Big)U_{A_1}w_1,U_{A_2}w_2}_Edr=0.\end{equation}

Note that the scattering data $C_{A_j}$ takes values in $U(n)$, so that we can define $C_{A_j}^{-1}$ as the matrix inverse of $C_{A_j}$. Making use of (\ref{HIP}), (\ref{PT2}), and (\ref{ScatteringData}), it can be shown that\begin{equation}\begin{split}\label{3.9}\pair{\big(C_{A_2}^{-1}C_{A_1}-\Id\big)w_1,w_2}_E=&\int_{0}^{\rho_+(y,\theta)}\d_r\pair{U_{A_1}w_1,U_{A_2}w_2}_Edr\\=&\int_{0}^{\rho_+(y,\theta)}\pair{\nabla_{\gamma'_{x,\theta}}^{A_2}U_{A_1}w_1,U_{A_2}w_2}_Edr+\int_{0}^{\rho_+(y,\theta)}\pair{U_{A_1}w_1,\nabla_{\gamma'_{x,\theta}}^{A_2}U_{A_2}w_2}_Edr\\=&\int_{0}^{\rho_+(y,\theta)}\pair{\Big(A_2\big(\gamma'_{x,\theta}(r)\big)-A_1\big(\gamma'_{x,\theta}(r)\big)\Big)U_{A_1}w_1,U_{A_2}w_2}_Edr.\end{split}\end{equation}

However, (\ref{3.8}) and (\ref{3.9}) imply that $\pair{(C_{A_2}^{-1}C_{A_1}-\Id)w_1,w_2}_E=0$. Since $w_1,w_2$ were arbitrary, we conclude that $C_{A_2}^{-1}C_{A_1}=\Id$, whence $C_{A_1}=C_{A_2}$ and the proof of Theorem \ref{t1} is complete. It remains for us to provide a proof of Lemma \ref{SPC}.

\begin{proof}[Proof of Lemma \ref{SPC}]
By considering a partition of unity subordinate to the open cover (\ref{cover}), it is sufficient to consider $A_1-A_2$ compactly supported in $(V_j\cap M)$ or $(W_k\cap M)$. We shall first prove the latter case. Recall that $a^{(A_j)}=s^{\frac{m-1}{4}}(a_0+O(s^{-1}))\chi(y/\delta')$. We can observe that
\[\begin{split}\lim_{s\rightarrow\infty}&\int_{(0,T)\times M}\pair{\Big((A_1-A_2),(d\Psi)u_1\Big)_g\,,u_2}_EdV_gdt\\=\lim_{s\rightarrow\infty}&\int_0^T\int_{0}^{\rho_+(x,\theta)}\int_{\R^{m-1}}e^{-s\Imag(H(r))y\cdot y}e^{sO(\abs{y}^3)}s^{\frac{m-1}{2}}\chi^2(y/\delta')\times\\&\Big[\pair{\Big(A_1(t,r,y)-A_2(t,r,y),d\Psi(t,r,y) a_0^{(A_1)}(t,r,y)\Big)_g\,,a_0^{(A_2)}(t,r,y)}_E+O(s^{-1})\Big]\abs{g(r,y)}^\frac{1}{2}dydrdt.\end{split}\]

We make the substitution $y\mapsto s^{-\frac{1}{2}}y$ above, and recall that $d\Psi\vert_{y=0}=dr$ is dual via the Riemannian metric to $\gamma'_{x,\theta}(r)$. Then, since $\Imag(H)$ is positive definite and $\delta'$ is small, we note that the exponential term involving $\Imag (H)$ dominates the others. Thus, we can conclude that the right-hand side of the above is given by\[\int_0^T\int_{0}^{\rho_+(x,\theta)}\Big(\int_{\R^{m-1}}e^{-\Imag(H(r))y\cdot y}dy\Big)\pair{\big(A_1(\gamma'_{x,\theta})-A_2(\gamma'_{x,\theta})\big)a_0^{(A_1)}(t,r,0),a_0^{(A_2)}(t,r,0)}_E\abs{g(r,0)}^{\frac{1}{2}}drdt.\]

Evaluating the integral in $y$ and using that $\abs{g(r,0)}=1$, we rewrite the above integral in the form

\[\int_0^T\tilde{\chi}^2\Big(\int_{\R^{m-1}}e^{-\abs{y}^2}dy\Big)\int_{0}^{\rho_+(x,\theta)}\frac{\abs{c_0}^2e^{-\int_{r_0}^r\tr\Real(H(s))ds}}{\sqrt{\det\Imag(H(r))}}\pair{\Big(A_1(\gamma'_{x,\theta})-A_2(\gamma'_{x,\theta})\Big)U_{A_1}w_1,U_{A_2}w_2}_Edrdt.\]

We now use the fact that, according to \cite[Lemma 2.58]{C5}, matrices which solve the Riccati equation (\ref{Riccati}) have the property\[\det\Imag(H(r))=\det\Imag(H(r_0))e^{-2\int_{r_0}^r\tr\Real(H(s))ds}.\]

This fact, together with our choice of constant $c_0$ in (\ref{c_0}) is sufficient to conclude that\[\begin{split}\lim_{s\rightarrow\infty}&\int_{(0,T)\times M}\pair{\big((A_1-A_2),(d\Psi)u_1\big)_g\,,u_2}_EdV_gdt\\=\int_0^T\tilde{\chi}^2&\int_{0}^{\rho_+(y,\theta)}\pair{\Big(A_1\big(\gamma'_{x,\theta}(r)\big)-A_2\big(\gamma'_{x,\theta}(r)\big)\Big)U_{A_1}w_1,U_{A_2}w_2}_Edrdt,\end{split}\]when $A_1-A_2$ is compactly supported in $W_k\cap M$. For $A_1-A_2$ compactly supported in $V_j\cap M$, we can write $v=\sum_{\gamma(t_l)=p_j}v^{(l)}$. Thus, the limit has the form\[\begin{split}\lim_{s\rightarrow\infty}\int_{(0,T)\times M}\pair{\big((A_1-A_2),(d\Psi)u_1\big)_g\,,u_2}_EdV_gdt\\=\sum_{\gamma(t_l)=p_j}\lim_{s\rightarrow\infty}\int_{(0,T)\times M}\pair{\big((A_1-A_2),(d\Psi)v_1^{(l)}\big)_g\,,v_2^{(l)}}_EdV_gdt\\+\sum_{\substack{l\neq l'\\{\gamma(t_l)=\gamma(t_{l'})=p_j}}}\lim_{s\rightarrow\infty}\int_{(0,T)\times M}\pair{\big((A_1-A_2),(d\Psi)v_1^{(l)}\big)_g\,,v_2^{(l')}}_EdV_gdt.\end{split}\]

We observe that the first sum converges to the required limit by the same compuations used to prove the limit in  $W_k\cap M$, and the second sum vanishes via stationary phase arguments as in the proof of \cite[Proposition 3.1]{C4}, thus completing the proof of Lemma \ref{SPC}.\end{proof}

\section{Proof of Gauge Equivalence}

In what follows, we denote by $\varphi_r(x,v)$ the geodesic flow given by $\varphi_r(x,v)=(\gamma_{x,v}(r),\gamma'_{x,v}(r))\in TM$. Additionally, we denote by $X$ the geodesic vector field on $M$, which satisfies $\d_r(F\circ\varphi_r)=X(F)\circ\varphi_r$, for any function $F:SM\mapsto\C^n$.\\

For a function $F:SM\rightarrow\C^n$ and a connection $1$-form $B:TM\mapsto\C^{n\times n}$, we can consider the transport equation:
\begin{equation}\begin{split}\label{ray}Xw+Bw=&-F\textrm{ in }SM\\w\vert_{\d_-SM}=&0,\end{split}\end{equation}where $B$ acts on $w:SM\rightarrow\C^n$ by multiplication at each point. In the present work, we shall consider only the cases $F(x,\theta)=f(x)$ or $F(x,\theta)=\alpha_j(x)\theta^j$, where $f,\alpha_j:M\rightarrow\C^n$.  Using the transport equation (\ref{ray}), we can define the attenuated ray transform of $F$ with attenuation due to $B$ (see e.g. \cite[Section 1]{C1}), via:\begin{equation}\label{ARTDefn}I_BF=w\vert_{\d_+SM}.\end{equation}

We can now proceed to the proof of Theorem \ref{t2} below.

\begin{proof}[Proof of Theorem \ref{t2}]
In order to prove Theorem \ref{t2}, we further assume that $M$ is either i) $2$-dimensional and simple, or ii) of dimension $m\geq3$ with strictly convex boundary and admits the existence of a smooth strictly convex function $\phi:M\rightarrow\R$. We note that this second condition is conjectured to be true for all simple manifolds (amongst others, see e.g. \cite[Section 2]{C1} for discussion), although the question is still open at present.\\

Consider the candidate gauge $G(t)=U_{A_1(t)}(U_{A_2(t)})^{-1}$. Note that $G(t)$ is unitary, $G(t):SM\rightarrow U(n)$, and depends smoothly on time. Further, since $C_{A_1}=C_{A_2}$, it holds that $G(t)\vert_{\d_+SM}=\Id$, and therefore that $G(t)\vert_{\d SM}=\Id$. We can observe that\begin{equation}\begin{split}\label{6.1}XG+A_1G-GA_2&=0\\G\vert_{\d SM}&=\Id.\end{split}\end{equation}

It remains to show that $G(t)$ depends only on the base-point $x\in M$, and further that $G(t)$ is smooth in $M$. Note that (\ref{6.1}) is equivalent to\begin{equation}\begin{split}\label{6.2}X(G-\Id)+A_1(G-\Id)-(G-\Id)A_2=A_2-A_1\\(G-\Id)\vert_{\d SM}=0.\end{split}\end{equation}

We can henceforth fix some $t\in(0,T)$ and define a new connection form $B$ via $B(W)=A_1W-WA_2$ for $W:SM\rightarrow\C^{n\times n}$. Then (\ref{6.2}) becomes\begin{equation}\begin{split}\label{ARTGauge}X(G-\Id)+B(G-\Id)=A_2-A_1\\(G-\Id)\vert_{\d SM}=0.\end{split}\end{equation}

We can interpret the above equation as a ray transform (see e.g. \cite[Section 1]{C1}), as $I_B(A_1-A_2)=0$, where $I_B$ denotes the ray transform with attenuation due to the connection form $B$. We emphasize here that $A_1$, $A_2$ and $B$ are smooth, whereas we have not yet shown that $G(t)$ in (\ref{ARTGauge}) is smooth.\\

We now apply either \cite[Theorem 1.3]{C3} if $M$ is $2$-dimensional and simple, or \cite[Theorem 1.6]{C1} if $M$ is of dimension $m\geq3$ with strictly convex boundary and admits a smooth strictly convex function. Thus, we conclude that $A_2(t)-A_1(t)=\nabla^{B(t)}p(t)$ for some smooth function $p(t):M\rightarrow \C^{n\times n}$, and (\ref{ARTGauge}) then implies that $G(t)=\Id+p(t)$. Hence, we have shown that $G(t)$ depends only on the base-point $x\in M$, and the smoothness of $G(t)$ in $M$ follows from the smoothness of $p(t)$. Therefore, $G$ satisfies all the necessary conditions to be a gauge, and it follows from (\ref{6.1}) that $A_1$ is gauge-equivalent to $A_2$ via the gauge-transform $A_2=G^{-1}dG+G^{-1}A_1G$, as required.\\

We now turn to showing unique determination of the potential. Note that since $A_2=G^{-1}dG+G^{-1}A_1G$, it remains only to show that $V_2=G^{-1}V_1G+iG^{-1}\d_tG$. We define $V_3=G^{-1}V_1G+iG^{-1}\d_tG$. By gauge invariance, it holds that $\Lambda_{A_1,V_1}=\Lambda_{A_2, V_3}$. Thus, by the assumption $\Lambda_{A_1,V_1}=\Lambda_{A_2,V_2}$ it follows that\begin{equation}\label{6.3}\Lambda_{A_2,V_3}=\Lambda_{A_2,V_2}.\end{equation}

It remains to show that condition (\ref{6.3}) implies that $V_2=V_3$. Note that we can take $A_1=A_2=A$ in (\ref{3.3}), and use the fact that $\Lambda_{A,V_3}=\Lambda_{A,V_2}$ to deduce that \[\int_{(0,T)\times M}\pair{(V_3-V_2)u_1,u_2}_E\circ\varphi_r(y,\theta)dV_gdt=0.\] We can let $s\rightarrow\infty$ in the left-hand side above, and applying the argument of Lemma \ref{SPC} we deduce that\[\int_0^T\tilde{\chi}^2(t)\int_0^{\rho_+(y,\theta)}\pair{(V_3-V_2)U_Aw_1,U_Aw_2}_E\circ\varphi_r(y,\theta)drdt=0.\] Since the choice of $\tilde{\chi}$ is arbitrary, we conclude that for $V=V_3-V_2$ and for each $t\in(0,T)$ and $(y,\theta)\in\d_+SM$ we have\[\int_{0}^{\rho_+(y,\theta)}\pair{VU_Aw_1,U_Aw_2}_E\circ\varphi_r(y,\theta)dr=0,\] where $\varphi_r(y,\theta)$ is the geodesic flow defined by $\varphi_r(y,\theta)=(\gamma_{y,\theta}(r),\gamma'_{y,\theta}(r))$. Then, by linearity, we conclude that\begin{equation}\label{6.4}\int_{0}^{\rho_+(y,\theta)}{U_A}^{-1}VU_A\circ\varphi_r(y,\theta)dr=0.\end{equation}

In order to finish the proof, we wish to interpret (\ref{6.4}) as an attenuated ray transform.\\

For $W\in C^\infty(SM;\C^{n\times n})$, we can define the map $BW=AW-WA=[A,W]$. Since $A(x,v)=A_j(x)v^j$, we can observe that $BW(x,v)=[A_j(x),W]v^j$.\\

Recall that the inner product $\pair{\cdot,\cdot}_E$ on the trivial bundle $E=M\times\C^n$ induces an inner product on the endomorphism bundle $\End(E)=M\times\C^{n\times n}$ via $\pair{X,Y}_{\End(E)}=\tr(X^\ast Y)$. Note that we can regard the $1$-form $B$ as a connection form on $\End(E)$. Further, by the cyclic property of trace, we observe that \[\pair{BX,Y}_{\End(E)}=\tr(-X^\ast AY+AX^\ast Y)=\tr(-X^\ast AY+X^\ast YA)=\pair{X,-BY}_{\End(E)},\] and that $B$ is, therefore, a unitary connection on the endomorphism bundle.\\

Letting $X$ once again denote the geodesic vector field, we consider $\omega$ the solution of the transport equation
\begin{equation}\label{6.5}X\omega+B\omega=-V\textrm{ in }SM,\quad \omega=0\textrm{ on }\d_-SM.\end{equation}Using (\ref{6.5}), we now observe that \[X({U_A}^{-1}\omega U_A)={U_A}^{-1}A\omega U_A+{U_A}^{-1}(-V-B\omega)U_A-{U_A}^{-1}\omega AU_A=-{U_A}^{-1}VU_A.\]

The above then implies that $\d_r[({U_A}^{-1}\omega U_A)\circ\varphi_r(y,\theta)]=-{U_A}^{-1}VU_A\circ\varphi_r(y,\theta)$, using the definitions of the geodesic vector field and geodesic flow. We can integrate this last expression to obtain\begin{equation}\label{6.6}-\int_{0}^{\rho_+(y,\theta)}{U_A}^{-1}VU_A\circ\varphi_r(y,\theta)dr=({U_A}^{-1}\omega U_A)\circ\varphi_{\rho_+(y,\theta)}(y,\theta)-({U_A}^{-1}\omega U_A)\circ\varphi_{0}(y,\theta).\end{equation}

Note that $\varphi_{\rho_+(y,\theta)}(y,\theta)\in\d_-SM$ and $\varphi_{0}(y,\theta)=(y,\theta)\in\d_+SM$. By recalling that that $U_A=\Id$ on $\d_+SM$ and $\omega=0$ on $\d_-SM$, we observe that the right-hand side of (\ref{6.6}) is just $-\omega\vert_{\d_+SM}$.\\

Therefore, (\ref{6.4}) and (\ref{6.5}) tell us that\[I_BV=\omega\vert_{\d_+SM}=\int_{0}^{\rho_+(y,\theta)}{U_A}^{-1}VU_A\circ\varphi_r(y,\theta)dr=0.\]

Hence, if $M$ is $2$-dimensional and simple, we can apply \cite[Theorem 1.3]{C3} to conclude that the above implies $V=0$. On the other hand, if $M$ is of dimension $m\geq3$ with strictly convex boundary and admits a smooth strictly convex function, we instead apply the result of \cite[Theorem 1.6]{C1} to conclude that $V=0$. Thus, it holds that $V_2=G^{-1}V_1G+iG^{-1}\d_tG$, and hence $(A_1,V_1)$ is gauge-equivalent to $(A_2,V_2)$, as required.
\end{proof}

\section*{Acknowledgments}
AT was supported by EPSRC DTP studentship EP/N509577/1.

\begin{small}

\end{small}

\end{document}